\DeclareMathOperator{\tr}{tr}
\DeclareMathOperator{\rank}{rank}
\newcommand{\Prob}{\mathbb{P}}
\def\R{\mathbb{R}}
\theoremstyle{plain}
  \newtheorem{theorem}{Theorem}
  \newtheorem{conjecture}{Conjecture}
  \newtheorem{lemma}{Lemma}
  \newtheorem{corollary}{Corollary}
  \newtheorem{question}{Question}
\theoremstyle{definition}
  \newtheorem{definition}{Definition}
  \newtheorem{remark}{Remark}
\begin{document}
\title[Controllability of random systems]{Controllability of random systems: Universality and minimal controllability} 

\author{Sean O'Rourke}
\address{Department of Mathematics, University of Colorado at Boulder, Boulder, CO 80309 }
\email{sean.d.orourke@colorado.edu}

\author{Behrouz Touri}
\address{Department of Electrical, Computer, and Energy Engineering, University of Colorado at Boulder, Boulder, CO 80309}
\email{behrouz.touri@colorado.edu}

\begin{abstract}
For a large class of random matrices $A$ and vectors $b$, we show that linear systems formed from the pair $(A,b)$ are controllable with high probability. Despite the fact that minimal controllability problems are, in general, NP-hard, we establish universality results for the minimal controllability of random systems.  Our proof relies on the recent developments of Nguyen--Tao--Vu \cite{NTV} concerning gaps between eigenvalues of random matrices.  
\end{abstract}

\maketitle

\section{Introduction}
In recent years, the topic of controllability of linear systems has regained a great deal of attention due to its potential impact on the controllability of networked systems. Several researchers have provided conditions on controllability of specific linear systems with a given set of parameters. These works include \cite{bahman,chapman,godsil,barabasi,marzieh,magnus} where several necessary and sufficient conditions have been obtained for controllability of a system with a given transition matrix and a given input vector/matrix.  Also, it was shown in \cite{alex} that the problem of minimal controllability of a linear system (i.e.\ finding the sparsest input vector that makes a given linear system controllable) is NP-hard, in general. 

Motivated by these works, we study the controllability of random systems.  That is, we consider linear systems whose parameters (as far as controllability is concerned) are random.  We confirm a common belief that ``most systems are highly controllable'' even when one deals with systems of a very discrete nature.  

For the purposes of this note, we define \textit{controllability} in terms of Kalman's rank condition \cite{kalman1962controllability}. 
\begin{definition}[Controllable]
Let $A$ be an $n \times n$ matrix, and let $b$ be a vector in $\R^n$.  We say the pair $(A,b)$ is \emph{controllable} if the $n \times n$ matrix
\begin{equation} \label{eq:Ab}
	\begin{pmatrix} b & Ab & \cdots & A^{n-1}b \end{pmatrix}
\end{equation}
has full rank (that is, rank $n$).  Here, the matrix in \eqref{eq:Ab} is the matrix with columns $b, Ab, \ldots, A^{n-1} b$.
\end{definition}

Our work is an attempt to address the following commonly believed conjecture.  
\begin{conjecture}\label{conj:conjecture1}
As $n\to\infty$, the probability that $(A,e_i)$ is controllable for all $i\in \{1,\ldots,n\}$ approaches one, where $A$ is the adjacency matrix of an Erd\H{o}s--R\'{e}nyi random graph $G(n,p)$, and $e_i$ is the $i$th member of the standard basis in $\R^n$.
\end{conjecture}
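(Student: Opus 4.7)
The plan is to reformulate controllability via the Popov--Belevitch--Hautus (PBH) eigenvector test. Since $A$ is symmetric, $(A,b)$ is controllable if and only if (i) all eigenvalues of $A$ are simple and (ii) no eigenvector of $A$ is orthogonal to $b$. Specializing to $b=e_i$, condition (ii) becomes: the $i$th coordinate of every eigenvector of $A$ is nonzero. Hence $(A,e_i)$ is controllable for every $i\in\{1,\ldots,n\}$ if and only if $A$ has simple spectrum and no eigenvector of $A$ has any vanishing coordinate. It therefore suffices to show that each of these two events holds with probability $1-o(1)$.

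For the simple-spectrum part, I would invoke the eigenvalue-gap estimate of Nguyen--Tao--Vu cited in the abstract: for symmetric random matrices whose independent off-diagonal entries have nontrivial variance and bounded subgaussian norm, they establish a polynomial tail bound of the form $\P(\min_{i\ne j}|\lambda_i(A)-\lambda_j(A)|\le n^{-C}) = o(1)$ for some $C>0$. This applies directly to the centered Erd\H{o}s--R\'{e}nyi matrix $M := A - p(J-I)$, where $J$ is the all-ones matrix. Since $p(J-I) = pJ - pI$ consists of a scalar multiple of the identity (which does not affect gaps) plus a deterministic rank-one perturbation (which affects them only via interlacing), the conclusion transfers to $A$, giving simple spectrum with probability $1-o(1)$.

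The main obstacle is condition (ii), which I would attack by a leave-one-out and anti-concentration argument. Fix $i$; let $A^{(i)}$ denote the $(n-1)\times(n-1)$ principal minor obtained by deleting row and column $i$, and let $a\in\{0,1\}^{n-1}$ denote the $i$th row of $A$ with its diagonal entry removed. If $Av=\lambda v$ with $v_i=0$, then the restriction $v^{(i)}$ of $v$ to the remaining coordinates satisfies $A^{(i)} v^{(i)} = \lambda v^{(i)}$, and the $i$th row of the eigenvalue equation reads $\langle a,v^{(i)}\rangle = 0$. Since $a$ is independent of $A^{(i)}$, conditioning on $A^{(i)}$ and summing over an orthonormal eigenbasis $u_1,\ldots,u_{n-1}$ of $A^{(i)}$ gives
\[
\P\bigl(\exists\,v\text{ eigenvector of }A\ \text{with}\ v_i=0\bigr) \le \sum_{k=1}^{n-1} \P\bigl(\langle a,u_k\rangle = 0 \,\big|\, A^{(i)}\bigr),
\]
and each conditional probability is then bounded via an Erd\H{o}s--Littlewood--Offord small-ball estimate for the Bernoulli vector $a$. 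A union bound over $i$ finishes the argument.

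The main technical hurdle lies in this small-ball step: to absorb the double union bound over $i$ and $k$ one needs each conditional probability to be $O(n^{-1-\delta})$ for some $\delta>0$, which in turn requires the eigenvectors $u_k$ of $A^{(i)}$ to be quantitatively delocalized and free of arithmetic structure. For $p$ bounded away from $0$ and $1$, the needed no-gaps delocalization for Wigner-type matrices is available and can be combined with inverse Littlewood--Offord theory. In sparser regimes, more refined eigenvector-structure estimates---again in the spirit of the techniques developed by Nguyen--Tao--Vu---would be required, and this is where I would expect the bulk of the technical effort to concentrate.
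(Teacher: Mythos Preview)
Your overall strategy coincides with the paper's: reduce to the PBH test, invoke Nguyen--Tao--Vu for simple spectrum, and use a leave-one-out argument together with a small-ball estimate for the eigenvectors of the minor to rule out vanishing coordinates. The paper carries out precisely this plan (Lemma~\ref{lemma:simplenz}, Theorem~\ref{thm:simple}, Lemmas~\ref{lemma:coordinate}--\ref{lemma:nonzero}, and Theorem~\ref{thm:smallball}).

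There is, however, a genuine gap in your simple-spectrum step. You argue that NTV gives simple spectrum for the centered matrix $M=A-p(J-I)$ and then try to ``transfer'' this to $A$ via interlacing under the rank-one perturbation $pJ$. But rank-one interlacing does not preserve simplicity: for instance $\diag(1,2)+e_1e_1^{\mathrm{T}}=\diag(2,2)$. So the inference from ``$M$ has simple spectrum'' to ``$A$ has simple spectrum'' is not justified by interlacing alone. The paper sidesteps this entirely: \cite[Theorem~2.6]{NTV} (stated here as Theorem~\ref{thm:simple}) already applies to $W+F$ for any deterministic symmetric $F$ with $\|F\|\le n^{\gamma}$, so one writes $\sigma^{-1}A=W+F$ with $W$ a mean-zero, unit-variance Wigner matrix and $F$ the (polynomially bounded) mean matrix, and obtains simple spectrum for $A$ directly, with no transfer needed.

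For the eigenvector step your outline is correct and matches the paper's argument. The paper does not combine ``no-gaps delocalization'' with inverse Littlewood--Offord separately, though; it invokes \cite[Theorem~4.3]{NTV} (Theorem~\ref{thm:smallball} here), which packages the inverse Littlewood--Offord machinery into a single statement: with overwhelming probability, every unit eigenvector $v$ of $W+F$ satisfies $\rho_{\xi,n^{-\beta}}(v)\le Cn^{-\alpha}$ for any prescribed $\alpha$. This yields conditional probabilities $O(n^{-\alpha-2})$, comfortably beating the double union bound over $i$ and $k$, so the ``main technical hurdle'' you flag is already resolved by the cited black box for fixed $0<p<1$.
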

In addition, we also discuss the following conjecture from \cite{godsil}\footnote{The conjecture in \cite{godsil} is slightly different than the one stated here as it involves the uniform probability distribution on graphs with $n$ vertices.}.
\begin{conjecture}[\cite{godsil}]\label{conj:conjecture2}
As $n\to\infty$, the probability that $(A,1_n)$ is controllable approaches one, where $A$ is the adjacency matrix of an Erd\H{o}s--R\'{e}nyi random graph $G(n,p)$ and $1_n$ is the all-one vector in $\R^n$.
\end{conjecture}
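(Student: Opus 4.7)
The plan is to reduce controllability of $(A,1_n)$ to an eigenstructure statement via the Popov-Belevitch-Hautus (PBH) test: since $A$ is symmetric, $(A, 1_n)$ is controllable if and only if (i) the eigenvalues of $A$ are all distinct and (ii) no eigenvector of $A$ is orthogonal to $1_n$. For the adjacency matrix of $G(n,p)$, condition (i) follows directly from the cited Nguyen-Tao-Vu results on eigenvalue gaps for random symmetric matrices, which yield simple spectrum with probability $1 - o(1)$.

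For the top eigenvector, observe that $\E[A] = p(J - I)$ has Perron eigenvector $1_n/\sqrt{n}$ with eigenvalue $p(n-1)$ and spectral gap of order $pn$. Standard concentration for the spectral norm of centered Wigner-type matrices gives $\|A - \E[A]\| = O(\sqrt{np})$ with high probability, so a Davis-Kahan perturbation estimate places the top eigenvector $u_1$ of $A$ within $O(1/\sqrt{np})$ of $1_n/\sqrt{n}$. Consequently $\langle u_1, 1_n \rangle = \sqrt{n}\,(1 + o(1))$, which is comfortably non-zero.

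The heart of the argument is condition (ii) for bulk eigenvectors $u_2, \ldots, u_n$. Here I would argue that the event that some $u_i$ is orthogonal to $1_n$ is a codimension-one algebraic condition on the entries of $A$. Fixing an index $i$ and conditioning on all rows and columns of $A$ other than the $i$th, the remaining Bernoulli variables enter linearly into the coupled system $(A - \lambda_i I)u_i = 0$ and $\langle u_i, 1_n\rangle = 0$. An inverse Littlewood-Offord / Hal\'asz-type anti-concentration bound — once again drawing on the Nguyen-Tao-Vu framework — would show that the probability of such an algebraic coincidence is at most inverse polynomial in $n$, and a union bound over the $n$ eigenvectors closes the argument.

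The main obstacle I anticipate is precisely this last step. The discrete, symmetric nature of $A$ makes \emph{exact} orthogonality a genuinely algebraic event that cannot be dismissed by soft measure-theoretic reasoning, and the symmetry of $A$ means the revealing of a single row also reveals a column, creating nonlinear couplings in the resolvent identities. Overcoming this likely requires a carefully chosen revealing order combined with a quantitative inverse Littlewood-Offord theorem, or a Lindeberg-style swap to a Gaussian model where the corresponding event is almost surely null. Either route calls on the full strength of the anti-concentration machinery referenced in the abstract.
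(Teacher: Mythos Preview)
The paper does \emph{not} prove Conjecture~\ref{conj:conjecture2}; it is explicitly listed as open in Section~\ref{sec:closing} (``controllability of a random matrix with an arbitrary deterministic input vector other than $e_i$ \ldots\ remains unanswered''). So there is no proof in the paper to compare against. Your reduction via the PBH criterion is exactly the paper's Lemma~\ref{lemma:simplenz}, the simple-spectrum step is Theorem~\ref{thm:simple}, and your Davis--Kahan treatment of the Perron eigenvector is standard and correct.

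The bulk-eigenvector step, which you flag as the main obstacle, is a genuine gap --- and it is precisely the gap the paper could not close. The anti-concentration engine here is Theorem~\ref{thm:smallball}, which bounds $\rho_{\xi,\delta}(v) = \sup_a \Prob(|\sum_k \xi_k v_k - a| \le \delta)$ where the $\xi_k$ are \emph{fresh iid samples independent of the eigenvector $v$}. For $b = e_i$ this independence is manufactured by peeling off the $i$th row and column (Lemmas~\ref{lemma:coordinate}--\ref{lemma:nonzero}, Lemma~\ref{lemma:eigenvectors}); for random $b$ it comes for free from the independence of $b$ and $W$. For the deterministic vector $1_n$ there is no independent randomness left to pair against $v_j(M)$, and your proposed row-revealing scheme recovers information about $e_i^{\mathrm{T}} v_j$, not about $1_n^{\mathrm{T}} v_j$. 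The nonlinear coupling you describe is real, and neither the inverse Littlewood--Offord route nor the Lindeberg swap you sketch is carried out here; the paper's machinery, as stated, does not reach this case. Your proposal is therefore an accurate diagnosis of where the difficulty lies, but not a proof.
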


We prove a stronger version of the first conjecture and provide some insights into the second conjecture. Our work heavily relies on the recent advancements in inverse Littlewood-Offord theory (see \cite{NV} for a survey) and its implications on eigenvalue spacing of random matrices recently studied by Nguyen, Tao, and Vu \cite{NTV}. We prove that, under a very general assumption on the random matrix $A$ and the (deterministic or random) vector $b$, the pair $(A,b)$ is controllable with high probability. We also establish that, for a general class of random matrices $A$, the pair $(A,e_i)$ is controllable \textit{for all} $i\in \{1,\ldots,n\}$ with high probability.  Our results suggest that, perhaps, the problem of controllability and minimal controllability is not a concerning problem for large systems.  

\subsection{Outline and notation}
The paper is organized as follows.  We introduce the main objects of study in Section~\ref{sec:prelim}, and we state our main results in Section~\ref{sec:mainresults}. We then present the technical tools we will use to prove our main results in Section~\ref{sec:tools}.  The proofs of our main results are presented in Section~\ref{sec:proof}.  Finally, Section \ref{sec:closing} contains a few closing remarks and some related open problems.

Throughout the paper, we use the following notation.  For any matrix $A$, $\|A\|$ is the spectral norm of $A$.  A matrix obtained by deleting the $i$th row and the $i$th column of $A$ is called a first minor of $A$. Since we only work with first minors, we simply refer to this matrix as the $i$th minor of $A$.  We let $A^\mathrm{T}$ be the transpose of $A$, and $A^\ast$ is the conjugate transpose of $A$. 

For an $n \times n$ Hermitian matrix $A$, we let 
$$ \lambda_1(A) \leq \cdots \leq \lambda_n(A) $$ denote the (ordered) eigenvalues of $A$ (counting algebraic multiplicity) with corresponding orthonormal eigenvectors $v_1(A), \ldots, v_n(A)$.  Since the eigenvectors of $A$ are not uniquely determined, unless otherwise stated, we always assume that the collection $\{v_1(A), \ldots, v_n(A)\}$ is any orthonormal basis of eigenvectors of $A$ such that 
$$ A v_i(A) = \lambda_i(A) v_i(A). $$  
We emphasis the fact that, throughout this paper, all the eigenvectors under consideration are unit length.  

We let $e_1, \ldots, e_n$ denote the $n$ elements of the standard basis in $\mathbb{R}^n$.  If $u$ and $v$ are vectors in $\mathbb{C}^n$ (alternatively $\mathbb{R}^n$), then $\|u\|$ is the Euclidean norm of $u$, and $u^\ast v$ (respectively $u^\mathrm{T} v$) denotes the standard Euclidean inner product of $u$ and $v$. 

For an event $E$, we let $E^c$ denote the complement of $E$. For a random variable $\xi$ with a given distribution, if there is no confusion, we refer to the distribution of $\xi$ simply as $\xi$.

Throughout this paper, we consider controllability of single-input systems. We often refer to a linear system whose transition matrix $A$ or input vector $b$ are random, as a \textit{random system}.

\section{Random matrix ensembles}\label{sec:prelim}
The focus of this paper is on controllability of a random matrix $A$ and a (deterministic or random) vector $b$.  In particular, we address the following question.  
\begin{question} \label{quest:main}
Let $b \in \R^n$ be nonzero.  If $A$ is an $n \times n$ random Hermitian matrix, what is the probability that $(A, b)$ is controllable?
\end{question}

Of course, the answer to Question \ref{quest:main} depends on the distribution of $A$ and the particular choice of $b$.  In this note, we consider two important ensembles of real symmetric random matrices.  

\subsection{Wigner random matrices}
Wigner \cite{W} introduced the following class of real symmetric random matrices with independent entries in the 1950s.

\begin{definition}[Wigner matrix]
Let $\xi$ and $\zeta$ be real random variables.  We say $W = (w_{ij})_{i,j=1}^n$ is an $n \times n$ \emph{Wigner matrix} with atom variables $\xi$ and $\zeta$ if $W$ is a real symmetric matrix whose entries satisfy the following:
\begin{itemize}
\item the entries $\{w_{ij} : 1 \leq i \leq j \leq n\}$ are independent random variables,
\item the upper triangular entries $\{w_{ij} : 1 \leq i < j \leq n\}$ are iid (independent and identically distributed) copies of $\xi$,
\item the diagonal entries $\{w_{ii} : 1 \leq i \leq n\}$ are iid copies of $\zeta$.
\end{itemize}
\end{definition}

A classical example of  a Wigner matrix is the \emph{adjacency matrix of a random graph}.  Let $G(n,p)$ be the Erd\H{o}s--R\'{e}nyi random graph on $n$ vertices with edge density $p$, i.e.\ it is a graph on $n$ vertices such that each pair of vertices $\{i,j\}$ is an edge in $G(n,p)$ with probability $p$, independent of other edges.  We denote by $A = (a_{ij})_{i,j=1}^n$ the zero-one adjacency matrix of $G(n,p)$.  That is,
$$ a_{ij} := \left\{
     \begin{array}{lr}
       1, & \text{if } i \sim j, \\
       0, & \text{if } i \not\sim j.
     \end{array}
   \right. $$
In particular, the upper-diagonal entries of $A$ are iid copies of a zero-one Bernoulli random variable (with parameter $p$), and the diagonal entries are all zero.

In general, we will be interested in Wigner random matrices whose entries are sub-gaussian.

\begin{definition}[Sub-gaussian]
We say the random variable $\xi$ is \emph{sub-gaussian} with sub-gaussian moment $\kappa > 0$ if
$$ \Prob( |\xi| \geq t) \leq \kappa^{-1} \exp(- \kappa t^2) $$
for all $ t > 0$.
\end{definition}

There are many examples of sub-gaussian random variables.  For instance, Gaussian random variables are sub-gaussian.  In addition, all bounded random variables (i.e.\ random variables that are supported on a bounded subset of $\R$) are sub-gaussian.

\subsection{Orthogonally invariant ensembles}
We also consider a class of orthogonally invariant ensembles.

\begin{definition}[Orthogonally invariant ensemble]
We say the $n \times n$ random symmetric matrix $M$ is drawn from an \emph{orthogonally invariant ensemble} if $M$ has probability distribution
\begin{equation} \label{eq:oe}
	\Prob(d M) = \frac{1}{Z_n} \exp(- \tr Q(M)) dM
\end{equation}
on the space of $n \times n$ real symmetric matrices, where $dM$ refers to the Lebesgue measure on the $n(n+1)/2$ algebraically independent entries of $M$, $Q$ is a real-valued function, and $Z_n$ is the normalization constant
$$ Z_n := \int e^{-\tr Q(M)} dM. $$
Here $Q(M)$ is defined by the functional calculus for $M$.
\end{definition}

The prototypical example of an orthogonally invariant ensemble is the \emph{Gaussian orthogonal ensemble} (GOE).  The GOE is defined by the probability distribution in \eqref{eq:oe} when $Q(x) = \frac{1}{4} x^2$.  In this case, a matrix drawn from the GOE is actually a Wigner random matrix.  Indeed, if $M = (m_{ij})_{i,j=1}^n$ is drawn from the GOE, the elements $\{ m_{ij} : 1 \leq i \leq j \leq n \}$ are independent Gaussian random variables with mean zero and variance $1+\delta_{ij}$, where 
$$ \delta_{ij} := \left\{
     \begin{array}{lr}
       1, & \text{if } i = j, \\
       0, & \text{if } i \neq j.
     \end{array} 
     \right. $$
We refer the reader to \cite{DG} for further examples and properties of orthogonally invariant ensembles.

\section{Main results}\label{sec:mainresults}
In this section, we present the main results of this work. To improve readability, we present the results first, and we postpone the technical details and proofs until the subsequent sections. Our main results below address Question \ref{quest:main} for the random matrix ensembles introduced above.  In particular, we will consider two distinct cases: when $b$ is deterministic and when $b$ is a random vector.

\subsection{Results for Wigner matrices and deterministic vectors} \label{sec:wigdet}

We first consider the controllability of $(W,e_i)$ when $W$ is a Wigner matrix.  

\begin{theorem}[Minimal controllability of Wigner matrices] \label{thm:wigner}
Let $\xi$ be a sub-gaussian random variable with mean zero and unit variance; let $\zeta$ be a (possibly degenerate) sub-gaussian random variable with mean zero.  Assume $W$ is an $n \times n$ Wigner random matrix with atom variables $\xi$ and $\zeta$.  Let $F$ be an $n \times n$ deterministic real symmetric matrix such that $\|F\| \leq n^{\gamma}$ for some fixed parameter $\gamma > 0$.  Then, for any $\alpha > 0$, there exists $C > 0$ (depending only on $\alpha$, $\gamma$, $\xi$, and $\zeta$) such that $(W+F, e_i)$ is controllable for every $1 \leq i \leq n$ with probability at least $1 - C n^{-\alpha}$.
\end{theorem}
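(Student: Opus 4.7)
The plan is to apply the Popov--Belevitch--Hautus test to reduce controllability to spectral conditions, and then verify those conditions via the Nguyen--Tao--Vu (NTV) eigenvalue gap and eigenvector structure estimates. Setting $M := W + F$ and using its spectral decomposition $M = V \Lambda V^T$, the Krylov matrix $[e_i, M e_i, \ldots, M^{n-1} e_i]$ factors as $V \cdot \diag(v_1(M)^T e_i, \ldots, v_n(M)^T e_i) \cdot U$, where $U$ is the Vandermonde matrix in the eigenvalues $\lambda_k(M)$. Hence $(M,e_i)$ is controllable if and only if $M$ has simple spectrum and every eigenvector of $M$ has nonzero $i$-th coordinate. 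It therefore suffices to show that, with probability at least $1 - C n^{-\alpha}$, (a) the eigenvalues of $M$ are all distinct, and (b) no coordinate of any eigenvector of $M$ vanishes.

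Part (a) follows immediately from the NTV gap estimate applied to the perturbed Wigner matrix $W + F$: for any $\alpha > 0$ there is a constant $C_0$ (depending on $\alpha, \gamma, \xi, \zeta$) such that $\min_{j \neq k} |\lambda_j(M) - \lambda_k(M)| \geq n^{-C_0}$ with probability at least $1 - O(n^{-\alpha})$. For part (b), I fix indices $i, j$ and, after permuting the $i$-th row and column of $M$ to the last position, write
\[
M = \begin{pmatrix} M_i & c_i \\ c_i^T & m_{ii} \end{pmatrix},
\]
where $M_i$ is the $i$-th minor and $c_i \in \R^{n-1}$ has independent sub-gaussian entries independent of $M_i$. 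If the $i$-th coordinate of $v_j(M)=(u,0)^T$ vanishes, the eigenvalue equation $M v_j = \lambda_j(M) v_j$ forces $M_i u = \lambda_j(M) u$ and $c_i^T u = 0$, so $u$ must coincide with some eigenvector $u_\ell$ of $M_i$. Conditioning on $M_i$, it therefore suffices to bound $\Prob(c_i^T u_\ell = 0 \mid M_i)$. The NTV inverse Littlewood--Offord framework shows that with overwhelming probability the eigenvectors $u_\ell$ of the random minor $M_i$ are sufficiently unstructured (in the sense required by the relevant inverse Littlewood--Offord theorem) to force $\Prob(c_i^T u_\ell = 0 \mid M_i) \leq n^{-\alpha - 3}$ for every $\ell$. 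A union bound over $\ell \in \{1,\ldots,n-1\}$ gives $\Prob((v_j(M))_i = 0) \leq n^{-\alpha - 2}$, and a further union bound over the $n^2$ pairs $(i, j)$ concludes the argument.

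The main obstacle is the small-ball estimate for $c_i^T u_\ell$. The classical Erdős--Littlewood--Offord inequality alone only produces a bound of order $n^{-1/2}$, which cannot absorb an $n^2$-term union bound. The essential content of NTV is to upgrade this to polynomial decay with any prescribed exponent by exploiting structural (delocalization / lack-of-arithmetic-progression) properties of eigenvectors of random symmetric matrices; this is exactly the input that makes the whole scheme work. The deterministic perturbation $F$ (with $\|F\| \leq n^\gamma$) and the sub-gaussian atoms enter only through routine stability and tail-truncation arguments to ensure that standard random-matrix bounds (operator norm, resolvent bounds) continue to hold for $W + F$ on the relevant high-probability event.
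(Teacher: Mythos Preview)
Your proposal is correct and follows essentially the same route as the paper: reduce controllability via the PBH/Vandermonde factorization to simple spectrum plus nonvanishing eigenvector coordinates, invoke the NTV gap theorem for the former, and for the latter pass to the $i$-th minor, exploit the independence of the deleted column $c_i$ from the minor $M_i$, and control $\Prob(c_i^{\mathrm T}u_\ell=0)$ by the NTV anti-concentration estimate for eigenvectors (Theorem~4.3 in \cite{NTV}), followed by a union bound over $\ell$ and over $(i,j)$. Your direct observation that $(v_j(M))_i=0$ forces the remaining coordinates to be a unit eigenvector of $M_i$ orthogonal to $c_i$ is in fact slightly cleaner than the paper's two-step route through Lemmas~\ref{lemma:coordinate} and~\ref{lemma:nonzero}; just be sure to include in your good event that $M_i$ has simple spectrum (which also follows from NTV applied to the minor), so that the union bound over eigenvectors of $M_i$ really runs over only $n-1$ vectors.
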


\begin{remark}
The matrix $M := W + F$ can be viewed as a Wigner matrix whose entries are not required to have mean zero.  Alternatively, it is often useful to view $M$ as a random perturbation of the matrix $F$.  
\end{remark}

As a consequence of Theorem \ref{thm:wigner}, we obtain the following corollary for the adjacency matrix of a random graph.

\begin{corollary}[Minimal controllability of random graphs] \label{cor:adj}
Fix $0 < p < 1$ and $\alpha > 0$.  Let $A$ be the adjacency matrix of $G(n,p)$.  Then there exists $C > 0$ (depending only on $p$ and $\alpha$) such that $(A, e_i)$ is controllable for each $1 \leq i \leq n$ with probability at least $1 - C n^{-\alpha}$.
\end{corollary}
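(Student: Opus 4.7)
The plan is to obtain Corollary \ref{cor:adj} as a direct specialization of Theorem \ref{thm:wigner} by centering and rescaling the adjacency matrix. Since the off-diagonal entries of $A$ are $\mathrm{Bernoulli}(p)$ and the diagonal is identically zero, we have $\E A = p(J - I)$, where $J$ is the $n \times n$ all-ones matrix and $I$ the identity. Setting $c := 1/\sqrt{p(1-p)}$, I would define
\begin{equation*}
W := c\,(A - \E A), \qquad F := c\, \E A = cp\,(J - I),
\end{equation*}
so that $W + F = cA$.

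Next I would verify that this $W$ and $F$ satisfy the hypotheses of Theorem \ref{thm:wigner}. The off-diagonal entries of $W$ are iid copies of the centered, rescaled Bernoulli variable $\xi := (a_{12} - p)/\sqrt{p(1-p)}$, which has mean zero, unit variance, and, being bounded, is automatically sub-gaussian. The diagonal entries are identically zero, corresponding to the degenerate atom $\zeta \equiv 0$, which trivially meets the stated conditions on $\zeta$. For the deterministic shift, the matrix $J - I$ has eigenvalues $n-1$ (with multiplicity $1$) and $-1$ (with multiplicity $n-1$), so $\|F\| \leq (n-1)\sqrt{p/(1-p)} \leq n^{\gamma}$ for any fixed $\gamma > 1$ and all $n$ sufficiently large. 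Theorem \ref{thm:wigner} then yields a constant $C > 0$ such that $(W + F, e_i) = (cA, e_i)$ is controllable for every $1 \leq i \leq n$ with probability at least $1 - C n^{-\alpha}$.

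Finally, I would pass from controllability of $(cA, e_i)$ to that of $(A, e_i)$ using the factorization
\begin{equation*}
\begin{pmatrix} e_i & (cA) e_i & \cdots & (cA)^{n-1} e_i \end{pmatrix} = \begin{pmatrix} e_i & A e_i & \cdots & A^{n-1} e_i \end{pmatrix} \diag(1, c, \ldots, c^{n-1}),
\end{equation*}
whose right-hand diagonal factor is invertible because $c \neq 0$; hence the two Krylov matrices have equal rank. Since Theorem \ref{thm:wigner} already asserts simultaneous controllability over all $i$, no union bound is required here.

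Because all the real content is carried by Theorem \ref{thm:wigner}, I do not anticipate any substantive obstacle in establishing the corollary. The only items to check are the sub-gaussian property of centered Bernoulli variables and the spectral-norm estimate for the rank-one-plus-scalar perturbation $p(J-I)$, both of which are elementary.
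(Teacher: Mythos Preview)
Your proposal is correct and follows essentially the same route as the paper: both center and rescale $A$ by $c=1/\sqrt{p(1-p)}$ so that $cA$ has the form $W+F$ with $W$ a Wigner matrix (atom variables the standardized Bernoulli and $\zeta\equiv 0$) and $F=cp(J-I)$, then invoke Theorem~\ref{thm:wigner} and use the scaling invariance of controllability. The only cosmetic difference is that you spell out the Krylov-matrix factorization for the scaling step, whereas the paper simply asserts that $(A,e_i)$ is controllable iff $(\sigma^{-1}A,e_i)$ is.
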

As a result of this corollary, Conjecture~\ref{conj:conjecture1} follows immediately. This result also suggests that for large systems, the difficult problem of minimal controllability \cite{alex} is \textit{most probably} a trivial problem. 

\subsection{Results for Wigner matrices and random vectors} \label{sec:wigrand}
We now consider the case when $b$ is a random vector.  In particular, we show below (see Corollary \ref{cor:adjrand}) that $(A,b)$ is controllable with high probability when $A$ is the adjacency matrix of a random graph $G(n,p)$ and $b$ is a random binary vector. 

We begin, as before, with a result for a very general class of Wigner matrices.  Indeed, Theorem \ref{thm:wignerrand} below shows that, with high probability, $(W,b)$ is controllable when $W$ is a Wigner matrix and $b$ is a random vector, independent of $W$.  More generally, we also consider deterministic perturbations of $W$ and $b$.  That is, we show $(W+F, b + \mu)$ is controllable with high probability, where $F$ is a deterministic matrix and $\mu$ is a deterministic vector.  
\begin{theorem} \label{thm:wignerrand}
Let $\xi$ be a sub-gaussian random variable with mean zero and unit variance; let $\zeta$ be a (possibly degenerate) sub-gaussian random variable with mean zero.  Assume $W$ is an $n \times n$ Wigner random matrix with atom variables $\xi$ and $\zeta$.  Let $F$ be an $n \times n$ deterministic real symmetric matrix such that $\|F\| \leq n^{\gamma}$ for some $\gamma > 0$.  Suppose $b$ is an independent random vector whose coordinates are iid copies of $\xi$; let $\mu$ be a deterministic vector.  Then, for any $\alpha > 0$, there exists $C > 0$ (depending only on $\alpha$, $\gamma$, $\xi$, and $\zeta$) such that $(W+F, b+\mu)$ is controllable with probability at least $1 - C n^{-\alpha}$.
\end{theorem}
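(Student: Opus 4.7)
The plan is to use the standard Hermitian characterization of controllability. For any Hermitian $M$ with spectral decomposition $M = \sum_{i=1}^n \lambda_i v_i v_i^\ast$ and any vector $v$, the Krylov matrix factors as
\[ \begin{pmatrix} v & Mv & \cdots & M^{n-1} v \end{pmatrix} = V \cdot \diag(\langle v_1, v\rangle, \ldots, \langle v_n, v\rangle) \cdot V_\lambda, \]
where $V = [v_1, \dots, v_n]$ is the orthogonal eigenvector matrix and $V_\lambda$ is the $n \times n$ Vandermonde matrix built from $\lambda_1, \dots, \lambda_n$. Consequently, $(M, v)$ is controllable if and only if (i) the eigenvalues of $M$ are pairwise distinct, and (ii) $\langle v_i, v\rangle \neq 0$ for every $i$. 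I would apply this with $M := W + F$ and $v := b + \mu$.

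For (i), I would invoke the Nguyen--Tao--Vu eigenvalue gap bound recalled in Section \ref{sec:tools}: under the subgaussian and $\|F\| \leq n^\gamma$ hypotheses, the minimum pairwise gap satisfies $\min_{i \neq j} |\lambda_i(M) - \lambda_j(M)| \geq n^{-C}$ on an event $\mathcal{E}_1$ of probability at least $1 - C n^{-\alpha}$. In particular, the spectrum of $M$ is simple on $\mathcal{E}_1$, which is precisely the same input already used for Theorem \ref{thm:wigner}.

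For (ii), I would condition on $W$, exploiting that $F$ and $\mu$ are deterministic while $b$ is independent of $W$. For each fixed $i$, the quantity
\[ \langle b + \mu, v_i(M)\rangle = \sum_{j=1}^n v_i(M)_j \, b_j + \langle \mu, v_i(M)\rangle \]
is a linear combination of the iid subgaussian coordinates of $b$ with coefficients equal to the entries of $v_i(M)$. To make a union bound over $i$ close under a probability budget $O(n^{-\alpha})$, I would need
\[ \Prob\bigl(\langle b + \mu, v_i(M)\rangle = 0 \,\bigm|\, M\bigr) \leq n^{-\alpha - 1} \]
on an event in the $\sigma$-algebra of $M$ of probability at least $1 - C n^{-\alpha}$. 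This should follow from the Nguyen--Vu inverse Littlewood--Offord theorem applied to the coefficient vector $v_i(M)$: with the exceptional probability we can afford, the coordinates of $v_i(M)$ are sufficiently unstructured (no dense generalized arithmetic progression approximates them) that any fixed independent linear combination has polynomially small small-ball probability. Combining with $\mathcal{E}_1$ and a union bound over $i$ would then yield the theorem.

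The main obstacle is the anti-concentration step (ii). A direct application of the Erd\H{o}s--Littlewood--Offord inequality using only eigenvector delocalization, say $\|v_i(M)\|_\infty = O(n^{-1/2} \log^{O(1)} n)$, delivers small-ball bounds of order $n^{-1/2}$ up to logarithmic factors, which are far too weak to tolerate a union bound over all $n$ eigenvectors. The target rate $n^{-\alpha - 1}$ demands the finer fact, implicit in the Nguyen--Tao--Vu treatment of simple spectrum, that eigenvectors of subgaussian Wigner-type matrices admit no rich additive structure with more than polynomially small exceptional probability. Packaging this non-structure statement into a clean small-ball estimate for the linear form $\sum_j v_i(M)_j b_j$ — and doing so uniformly in $i$ — is the technical heart of the argument.
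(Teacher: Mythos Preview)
Your proposal is correct and follows essentially the same route as the paper: reduce controllability to simple spectrum plus nonvanishing eigenvector inner products (the paper isolates this as Lemma~\ref{lemma:simplenz}), invoke Nguyen--Tao--Vu for simplicity, and then condition on $M$ and bound each $\Prob(\langle v_i(M), b+\mu\rangle = 0)$ by a small-ball probability of the eigenvector, followed by a union bound. The ``packaged'' anti-concentration statement you identify as the technical heart is exactly \cite[Theorem~4.3]{NTV}, which the paper quotes as Theorem~\ref{thm:smallball} and applies verbatim.
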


Another interesting problem is the controllability of the pair $(W+F,u)$ when $u$ is uniformly distributed over the unit sphere, independent of $W$.  
\begin{theorem} \label{thm:wignerunif}
Let $\xi$ be a sub-gaussian random variable with mean zero and unit variance; let $\zeta$ be a (possibly degenerate) sub-gaussian random variable with mean zero.  Assume $W$ is an $n \times n$ Wigner random matrix with atom variables $\xi$ and $\zeta$.  Let $F$ be an $n \times n$ deterministic real symmetric matrix such that $\|F\| \leq n^{\gamma}$ for some $\gamma > 0$.  Suppose $u$ is an independent random vector uniformly distributed on the unit sphere in $\mathbb{R}^n$.  Then, for any $\alpha > 0$, there exists $C > 0$ (depending only on $\alpha$, $\gamma$, $\xi$, and $\zeta$) such that $(W+F, u)$ is controllable with probability at least $1 - C n^{-\alpha}$.
\end{theorem}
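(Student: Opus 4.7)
The plan is to reduce controllability of $(M,u)$, with $M := W+F$, to two independent events about the spectrum of $M$ and the alignment of $u$ with its eigenvectors. Since $M$ is real symmetric, diagonalize $M = V \Lambda V^\mathrm{T}$ with $V$ orthogonal and $\Lambda = \diag(\lambda_1,\ldots,\lambda_n)$, and set $c := V^\mathrm{T} u$. A direct computation gives
\[
\bigl( u,\, Mu,\, \ldots,\, M^{n-1} u \bigr) \;=\; V \cdot \diag(c) \cdot V_\lambda,
\]
where $V_\lambda$ is the Vandermonde matrix with $(V_\lambda)_{ij} = \lambda_i^{j-1}$. Taking determinants,
\[
\det\bigl( u,\, Mu,\, \ldots,\, M^{n-1} u \bigr) \;=\; \pm \Bigl( \prod_{i=1}^n c_i \Bigr) \prod_{1\le i < j \le n} (\lambda_j - \lambda_i),
\]
so $(M,u)$ is controllable if and only if (a) the eigenvalues of $M$ are all distinct, and (b) $v_i(M)^\mathrm{T} u \neq 0$ for every $i$.

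For (a), I would invoke the gap estimates of Nguyen--Tao--Vu exactly as in the proofs of Theorems~\ref{thm:wigner} and~\ref{thm:wignerrand}: the event $\mathcal{E}_1 := \{\lambda_1 < \lambda_2 < \cdots < \lambda_n\}$ holds with probability at least $1 - C_1 n^{-\alpha}$ for a constant $C_1 = C_1(\alpha,\gamma,\xi,\zeta)$. This step is treated as a black box and imported from the earlier arguments.

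For (b), I would condition on $W$, hence on the eigenvector basis $v_1(M),\ldots,v_n(M)$. Since $u$ is independent of $W$ and uniform on the unit sphere in $\R^n$, for each fixed unit vector $v \in \R^n$ the hyperplane $\{x : v^\mathrm{T} x = 0\}$ meets $S^{n-1}$ in a great $(n-2)$-sphere of surface measure zero (the case $n=1$ being trivial since then $u \in \{-1,+1\}$). Therefore $\Prob(v_i(M)^\mathrm{T} u = 0 \mid W) = 0$ for each $i$, and a union bound over $i=1,\ldots,n$ gives $\Prob(\mathcal{E}_2^c \mid W) = 0$ almost surely, where $\mathcal{E}_2 := \{ v_i(M)^\mathrm{T} u \neq 0 \text{ for all } i \}$. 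Integrating, $\Prob(\mathcal{E}_2) = 1$. Combining with (a), controllability holds on $\mathcal{E}_1 \cap \mathcal{E}_2$ with probability at least $1 - C n^{-\alpha}$.

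There is no serious obstacle here; in fact, this statement is strictly easier than Theorem~\ref{thm:wignerrand}, because the continuous, rotationally invariant distribution of $u$ eliminates the need for any Littlewood--Offord-type anti-concentration step on the eigenvector inner products. The only nontrivial ingredient is the simple-spectrum bound, which is already established in the ambient framework of the paper and is used verbatim.
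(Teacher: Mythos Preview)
Your proposal is correct and follows essentially the same route as the paper's proof: the paper invokes Lemma~\ref{lemma:simplenz} (which is precisely your Vandermonde-determinant computation packaged as a sufficient condition), then applies Theorem~\ref{thm:simple} for the simple-spectrum event and Lemma~\ref{lemma:uniform} for the eigenvector-alignment event, exactly matching your steps (a) and (b). Your remark that this is strictly easier than Theorem~\ref{thm:wignerrand} is also the paper's implicit point---no anti-concentration input is needed here.
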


As a consequence of the previous two theorems, we have the following interesting result on the controllability of random graphs. 
\begin{corollary} \label{cor:adjrand}
Fix $0 < p < 1$ and $\alpha > 0$.  Let $A$ be the adjacency matrix of $G(n,p)$.  Let $b$ be an independent random vector whose entries are iid zero-one Bernoulli random variables with parameter $p$, and let $u$ be an independent random vector uniformly distributed on the unit sphere in $\mathbb{R}^n$.  Then there exists $C > 0$ (depending only on $p$ and $\alpha$) such that $(A, b)$ and $(A,u)$ are controllable with probability at least $1 - C n^{-\alpha}$.
\end{corollary}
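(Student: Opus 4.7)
The plan is to reduce Corollary \ref{cor:adjrand} directly to Theorems \ref{thm:wignerrand} and \ref{thm:wignerunif} by centering and rescaling the Bernoulli entries of the adjacency matrix. I would set $\xi_0 := (\beta - p)/\sqrt{p(1-p)}$, where $\beta$ is a Bernoulli$(p)$ random variable, so that $\xi_0$ has mean zero, unit variance, and is bounded, hence sub-gaussian with moment depending only on $p$. Defining $\tilde W_{ij} := (A_{ij}-p)/\sqrt{p(1-p)}$ for $i\neq j$ and $\tilde W_{ii} := 0$ yields a Wigner matrix with atom variables $\xi_0$ and the degenerate $\zeta_0 \equiv 0$, and one has
\[
A \;=\; \sqrt{p(1-p)}\,\tilde W + F, \qquad F := p(J_n - I_n),
\]
where $J_n$ is the $n \times n$ all-ones matrix. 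The shift $F$ is real symmetric and satisfies $\|F\| \leq p(n+1)$, comfortably meeting the norm hypothesis of both theorems with, say, $\gamma = 1$.

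Next I would record an elementary invariance: controllability of a pair $(M,v)$ is preserved under replacing $M$ by $cM$ and $v$ by $c'v$ for any nonzero scalars $c,c'$. Indeed, the $k$th column of the Krylov matrix in \eqref{eq:Ab} is simply multiplied by $c'c^k \neq 0$, which is a column rescaling and preserves rank. In particular, $(A,b)$ is controllable if and only if $(A/\sqrt{p(1-p)},\,b/\sqrt{p(1-p)})$ is, and $(A,u)$ is controllable if and only if $(A/\sqrt{p(1-p)},\,u)$ is.

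For the $b$-statement I would decompose the binary input as $b = \sqrt{p(1-p)}\,\tilde b + p\,1_n$, where $\tilde b$ is an independent random vector whose coordinates are iid copies of $\xi_0$. Combining with the two previous steps identifies controllability of $(A,b)$ with controllability of
\[
\bigl(\tilde W + F/\sqrt{p(1-p)},\;\; \tilde b + (p/\sqrt{p(1-p)})\,1_n\bigr),
\]
which is precisely the hypothesis of Theorem \ref{thm:wignerrand} with atoms $(\xi_0,\zeta_0)$, deterministic perturbation of operator norm $O(n)$, and deterministic shift $\mu = (p/\sqrt{p(1-p)})\,1_n$. For the $u$-statement, the same rescaling reduces controllability of $(A,u)$ to that of $(\tilde W + F/\sqrt{p(1-p)},\,u)$, to which Theorem \ref{thm:wignerunif} applies directly. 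Since $\tilde W$ is a deterministic function of $A$ alone, the required independence between the random matrix and the input vector is preserved in both reductions.

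The argument is a pure reduction to results already proved in the paper, so I do not anticipate any significant technical obstacle. The only points that need to be verified carefully are the scale-invariance of the Kalman rank and the fact that the centered Bernoulli atom $\xi_0$ satisfies all the sub-gaussian, mean zero, unit variance assumptions of Theorems \ref{thm:wignerrand}--\ref{thm:wignerunif}; both are immediate.
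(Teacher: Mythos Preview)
Your proposal is correct and follows essentially the same route as the paper's own proof: center and rescale the Bernoulli entries to write $\sigma^{-1}A$ as a Wigner matrix $W$ with atom variables $\xi_0,\zeta_0\equiv 0$ plus the deterministic perturbation $F$ with off-diagonal entries $p/\sigma$, rescale $b$ similarly to match the hypotheses of Theorem \ref{thm:wignerrand}, and invoke Theorem \ref{thm:wignerunif} directly for the uniform vector $u$. The only cosmetic difference is that you carry the scalar $\sqrt{p(1-p)}$ through explicitly and record the rank invariance under column scaling, whereas the paper absorbs it by passing to $\sigma^{-1}A$ and $\sigma^{-1}b$ at the outset.
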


\subsection{Results for orthogonally invariant ensembles} \label{sec:oe}
Due to the continious nature of the orthogonally invariant ensembles and the singular nature of controllability, we prove stronger universality results for random matrices drawn from this ensemble.  
\begin{theorem}[Controllability of any orthogonally invariant ensemble] \label{thm:oe}
Let $M$ be an $n \times n$ random real symmetric matrix drawn from an orthogonally invariant ensemble.  Let $b$ be any nonzero vector in $\mathbb{R}^n$.  Then $(M,b)$ is controllable with probability one.
\end{theorem}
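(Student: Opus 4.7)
The plan is to observe that controllability of $(M,b)$ is equivalent to the non-vanishing of a single polynomial in the entries of $M$, and then exploit the absolute continuity of the distribution \eqref{eq:oe} with respect to Lebesgue measure on the space of real symmetric matrices. Specifically, with $b \in \mathbb{R}^n$ fixed, define
\[
P(M) := \det\!\begin{pmatrix} b & Mb & \cdots & M^{n-1} b \end{pmatrix},
\]
which is a polynomial in the $n(n+1)/2$ algebraically independent entries of $M$. Since \eqref{eq:oe} defines a probability measure with a strictly positive density against $dM$, it suffices to prove that the zero set $\{M : P(M) = 0\}$ has Lebesgue measure zero on the space of real symmetric matrices. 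For this, it is enough to show that $P$ is not identically zero as a polynomial, since the zero locus of any nonzero polynomial is a proper algebraic subvariety and hence Lebesgue-null.

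To exhibit a single symmetric matrix $M_0$ with $P(M_0) \neq 0$, first choose any orthogonal matrix $O$ such that $O b = \|b\| c$, where $c = \tfrac{1}{\sqrt{n}}(1,1,\ldots,1)^\mathrm{T}$; such an $O$ exists because $b \neq 0$. Set $D = \diag(1,2,\ldots,n)$ and $M_0 := O^\mathrm{T} D O$. A direct computation gives
\[
\begin{pmatrix} b & M_0 b & \cdots & M_0^{n-1} b \end{pmatrix} = \|b\|\, O^\mathrm{T} \begin{pmatrix} c & D c & \cdots & D^{n-1} c \end{pmatrix},
\]
and the right-hand matrix inside the parentheses has determinant equal to $\prod_i c_i$ times the Vandermonde determinant of the distinct diagonal entries $1,2,\ldots,n$, which is nonzero. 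Since $O^\mathrm{T}$ is invertible, $P(M_0) \neq 0$, proving that $P$ is not the zero polynomial.

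Combining these two observations, the set of $M$ for which $(M,b)$ fails to be controllable is contained in $\{P = 0\}$, a Lebesgue-null algebraic set in the space of real symmetric matrices. Because the density $Z_n^{-1} \exp(-\tr Q(M))$ is finite on every compact set, the measure \eqref{eq:oe} is absolutely continuous with respect to Lebesgue measure, so this null set carries zero probability. Hence $(M,b)$ is controllable with probability one.

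There is no serious obstacle: the only potentially delicate point is the construction of the witness $M_0$ for an arbitrary nonzero $b$, and this is handled by conjugating a fixed controllable pair by an orthogonal matrix that maps $b$ to a vector with all nonzero coordinates. The argument is in fact completely independent of the specific function $Q$, using only that \eqref{eq:oe} is absolutely continuous; this is the source of the stronger conclusion (probability one rather than $1 - Cn^{-\alpha}$) compared with the Wigner results, and reflects the contrast emphasized in Section~\ref{sec:oe} between the continuous nature of this ensemble and the more singular behavior of discrete Wigner matrices.
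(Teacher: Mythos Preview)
Your proof is correct, but it takes a genuinely different route from the paper. The paper exploits orthogonal invariance directly: it introduces an independent Haar-distributed orthogonal $U$, observes that $(M,b)$ has the same controllability law as $(M,U^\ast b)$, and then reduces to the case where $b$ is replaced by a uniform vector on the sphere; controllability then follows from Lemma~\ref{lemma:simplenz} together with the (cited) fact that orthogonally invariant ensembles have simple spectrum almost surely, and Lemma~\ref{lemma:uniform}. Your argument, by contrast, never uses orthogonal invariance at all---only that \eqref{eq:oe} is absolutely continuous with respect to $dM$. You package controllability as the non-vanishing of a single polynomial $P$ in the entries of $M$, exhibit an explicit symmetric $M_0$ with $P(M_0)\neq 0$, and conclude that $\{P=0\}$ is Lebesgue-null. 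This is more elementary and in fact more general: it applies verbatim to any random symmetric matrix with a density, not merely to orthogonally invariant ensembles, and it bypasses both the simple-spectrum citation and the eigenvector criterion. The paper's approach, on the other hand, ties the result into the same framework (Lemma~\ref{lemma:simplenz}) used for the Wigner theorems and makes the role of rotational symmetry transparent. One small remark: your sentence ``the density $Z_n^{-1}\exp(-\tr Q(M))$ is finite on every compact set'' is not quite the relevant point (and need not hold without further hypotheses on $Q$); what you actually use, and all you need, is that the law has a density with respect to $dM$, which is the definition.
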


As a consequence of the above result, we immediately obtain the following corollary.  
\begin{corollary} \label{cor:oe}
Let $M$ be an $n \times n$ random real symmetric matrix drawn from an orthogonally invariant ensemble.  Let $b$ be an independent random vector in $\mathbb{R}^n$ (with any distribution).  Then $(M,b)$ is controllable with probability $1 - \Prob( b = 0)$.
\end{corollary}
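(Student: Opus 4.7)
The plan is to reduce Corollary~\ref{cor:oe} to Theorem~\ref{thm:oe} by conditioning on the value of $b$ and using the independence of $M$ and $b$.

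First I would observe the trivial case: when $b = 0$, the Krylov matrix $\begin{pmatrix} b & Mb & \cdots & M^{n-1}b\end{pmatrix}$ is the all-zero matrix, hence has rank $0 < n$, so $(M, b)$ is not controllable. Thus $\Prob\bigl((M,b)\text{ controllable},\, b = 0\bigr) = 0$.

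Next, on the event $\{b \neq 0\}$, I would partition over the (measurable) value of $b$ and apply Theorem~\ref{thm:oe} conditionally. Concretely, since $M$ and $b$ are independent, Fubini's theorem gives
\begin{equation*}
\Prob\bigl((M,b)\text{ controllable},\, b \neq 0\bigr) = \int_{\R^n \setminus \{0\}} \Prob\bigl((M,v)\text{ controllable}\bigr)\, dP_b(v),
\end{equation*}
where $P_b$ denotes the distribution of $b$. For each fixed nonzero $v \in \R^n$, Theorem~\ref{thm:oe} asserts that $\Prob((M,v)\text{ controllable}) = 1$, so the integrand equals $1$ almost surely with respect to $P_b$ on $\R^n \setminus \{0\}$. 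The integral therefore reduces to $P_b(\R^n \setminus \{0\}) = 1 - \Prob(b = 0)$, and combining with the case $b = 0$ yields the claimed probability.

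The only technical point worth checking is measurability of the event $\{(M,b)\text{ controllable}\}$ in the product $\sigma$-algebra, which is immediate since controllability is determined by the non-vanishing of the determinant of the Krylov matrix, a polynomial (hence Borel measurable) function of the entries of $M$ and $b$. Beyond this, there is no substantive obstacle; the corollary is essentially a bookkeeping consequence of Theorem~\ref{thm:oe} together with independence.
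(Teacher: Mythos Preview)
Your proof is correct and follows essentially the same approach as the paper: both condition on $b$, use independence of $M$ and $b$, and invoke Theorem~\ref{thm:oe} for each fixed nonzero value. Your version is slightly more explicit about the Fubini step and measurability, whereas the paper simply writes $\Prob(\Omega\mid b\neq 0)=1$ directly from Theorem~\ref{thm:oe}, but the underlying argument is the same.
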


\section{Tools} \label{sec:tools}
We now present the tools we will need to prove our main results.
\subsection{Tools from linear algebra}
Recall that the spectrum of an $n \times n$ Hermitian matrix $A$ is said to be \emph{simple} if all of its $n$ eigenvalues are distinct; that is,
$$ \lambda_1(A) < \cdots < \lambda_n(A). $$

In the sequal, we make use of the following criteria for controllability, which is based on the Popov--Belevitch--Hautus Controllability Test (see, for instance, \cite[Theorem 12.3]{hespanha}).  

\begin{lemma}[Controllability criteria] \label{lemma:simplenz}
Let $A$ be an $n \times n$ Hermitian matrix, and let $b$ be any nonzero vector.  If $A$ has simple spectrum and $v_j(A)^\ast b \neq 0$ for all $1 \leq j \leq n$, then $(A,b)$ is controllable.
\end{lemma}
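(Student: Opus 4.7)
The plan is to diagonalize $A$ via its spectral decomposition and then factor the Kalman controllability matrix \eqref{eq:Ab} as a product of three manifestly invertible matrices, so that its invertibility is equivalent to the two nondegeneracy hypotheses.

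First I would write $A = V \Lambda V^\ast$, where $V$ is the unitary matrix with columns $v_1(A), \ldots, v_n(A)$ and $\Lambda = \diag(\lambda_1(A), \ldots, \lambda_n(A))$. Setting $c := V^\ast b$, the hypothesis gives $c_j = v_j(A)^\ast b \neq 0$ for every $j$. Since $A^{k} b = V \Lambda^{k} V^\ast b = V \Lambda^{k} c$, the Kalman matrix in \eqref{eq:Ab} equals
$$ \begin{pmatrix} b & Ab & \cdots & A^{n-1}b \end{pmatrix} = V \begin{pmatrix} c & \Lambda c & \cdots & \Lambda^{n-1} c \end{pmatrix}. $$

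Next I would observe that the right-hand factor splits further: the $(j,k)$-entry of $\begin{pmatrix} c & \Lambda c & \cdots & \Lambda^{n-1} c \end{pmatrix}$ is $c_j \lambda_j(A)^{k-1}$, so this matrix equals $\diag(c_1, \ldots, c_n) \cdot U$, where $U$ is the (transposed) Vandermonde matrix with entries $U_{jk} = \lambda_j(A)^{k-1}$. Combining the two displays yields the factorization
$$ \begin{pmatrix} b & Ab & \cdots & A^{n-1}b \end{pmatrix} = V \cdot \diag(c_1, \ldots, c_n) \cdot U. $$

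Finally, I would read off invertibility factor by factor. The matrix $V$ is unitary, hence invertible; $\diag(c_1,\ldots,c_n)$ is invertible because each $c_j = v_j(A)^\ast b$ is nonzero by hypothesis; and $U$ has determinant $\prod_{1 \le j < k \le n} (\lambda_k(A) - \lambda_j(A))$, which is nonzero because $A$ has simple spectrum. Therefore the product has full rank, and $(A,b)$ is controllable. There is essentially no obstacle in this argument — the content is simply that the two hypotheses of the lemma correspond exactly to the two sources of nondegeneracy (the diagonal factor and the Vandermonde factor) in the above factorization.
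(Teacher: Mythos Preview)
Your proof is correct and is essentially the same argument as the paper's: both use the spectral decomposition of $A$ to reduce the rank of the Kalman matrix to the invertibility of the diagonal matrix $\diag(v_1(A)^\ast b,\ldots,v_n(A)^\ast b)$ and the Vandermonde matrix built from the eigenvalues. The only cosmetic difference is that the paper argues by contradiction (assuming a nonzero kernel vector and unwinding), whereas you give the factorization directly; your version is slightly cleaner but the content is identical.
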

\begin{remark}
In general, the eigenvectors $v_1(A), \ldots, v_n(A)$ of $A$ are not uniquely determined.  However, if the spectrum of $A$ is simple, then the eigenvectors of $A$ are uniquely determined up to phase.  In particular, the condition $v_j(A)^\ast b \neq 0$ is not affected by this choice of phase.  
\end{remark}
\begin{proof}[Proof of Lemma \ref{lemma:simplenz}]
In order to reach a contradiction, assume the matrix
$$ B := \begin{pmatrix} b & Ab &\cdots& A^{n-1}b \end{pmatrix}$$
does not have full rank.  Then there exists a nonzero vector $a = (a_j)_{j=0}^{n-1}$ such that $B a = 0$.

For notational convenience, let $\lambda_1 < \cdots < \lambda_n$ be the eigenvalues of $A$ with corresponding orthonormal eigenvectors $v_1, \ldots, v_n$.  By the spectral theorem, for any $k \in \mathbb{N}$,
$$ A^k b = \sum_{j=1}^n (\lambda_j^k v_j^\ast b) v_j. $$
Thus, using the notation that $A^0$ is the identity matrix, we have
\begin{align*}
	0 = B a &= \sum_{k=0}^{n-1} a_k A^k b = \sum_{k=1}^{n-1} a_k \sum_{j=1}^n \lambda_j^k v_j^\ast b v_j = \sum_{j=1}^n v_j \left( \sum_{k=0}^{n-1} v_j^\ast b \lambda_j^k a_k \right) = \sum_{j=1}^n \beta_j v_j,
\end{align*}
where
$$ \beta_j := \sum_{k=0}^{n-1} v_j^\ast b \lambda_j^k a_k. $$

Since $v_1, \ldots, v_n$ are linearly independent, it must be the case that $\beta_j = 0$ for all $1 \leq j \leq n$.  We observe that
\begin{align*}
	\beta_j &= v_j^\ast b \sum_{k=0}^{n-1} \lambda_j^k a_k =  v_j^\ast b \begin{pmatrix} \lambda_j^{0} \\ \lambda_j \\ \vdots \\ \lambda_j^{n-1} \end{pmatrix}^{\mathrm{T}} a.
\end{align*}
Since $v_j^\ast b \neq 0$ for all $1 \leq j \leq n$ by assumption, it must be the case that
$$ \begin{pmatrix} \lambda_j^{0} \\ \lambda_j \\ \vdots \\ \lambda_j^{n-1} \end{pmatrix}^{\mathrm{T}} a = 0 $$
for all $1 \leq j \leq n$.  However, this implies that the matrix
$$ \Lambda := \begin{pmatrix} \lambda_1^{0} & \lambda_2^{0} & \cdots & \lambda_n^{0} \\ \lambda_1 & \lambda_2 & \cdots & \lambda_n \\ \vdots & \vdots & \ddots & \vdots \\ \lambda_1^{n-1} & \lambda_2^{n-1} & \cdots & \lambda_n^{n-1} \end{pmatrix}^{\mathrm{T}} $$
is singular.  As $\Lambda$ is a Vandermonde matrix, we find that
$$ \det \Lambda = \prod_{i < j} (\lambda_j - \lambda_i) = 0. $$
This implies that the spectrum of $A$ is not simple, a contradiction.
\end{proof}

Cauchy's interlacing theorem implies that the eigenvalues of the $i$th minor of a Hermitian matrix interlace with the eigenvalues of the original matrix; see \cite[Corollary III.1.5]{B} for details. One of our main observations is that, if this interlacing is strict, then the pair $(A,e_i)$ is controllable. For this, we will need the following result from \cite{ESY}; see also \cite[Lemma 41]{TVuniv}.

\begin{lemma}[\cite{ESY}; Lemma 41 from \cite{TVuniv}] \label{lemma:coordinate}
Let
$$ A_n = \begin{pmatrix} A_{n-1} & X \\ X^\ast & d \end{pmatrix} $$
be an $n \times n$ Hermitian matrix, where $A_{n-1}$ is the upper-left $(n-1) \times (n-1)$ minor of $A_n$, $X \in \mathbb{C}^{n-1}$, and $d \in \mathbb{R}$.  Let $\begin{pmatrix} v \\ x \end{pmatrix}$ be a unit eigenvector of $\lambda_i(A_n)$, where $v \in \mathbb{C}^{n-1}$ and $x \in \mathbb{C}$.  Suppose that none of the eigenvalues of $A_{n-1}$ are equal to $\lambda_i(A_n)$.  Then
$$ |x|^2 = \frac{1}{1 + \sum_{j=1}^{n-1} (\lambda_j(A_{n-1}) - \lambda_i(A_n))^{-2} |v_j(A_{n-1})^\ast X|^2}, $$
where $v_j(A_{n-1})$ is a unit eigenvector corresponding to the eigenvalue $\lambda_j(A_{n-1})$. In particular, under the assumptions above, $x \neq 0$.
\end{lemma}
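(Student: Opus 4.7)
The plan is to use the standard eigenvalue equation for the block matrix $A_n$. Writing the eigenvector equation $A_n \begin{pmatrix} v \\ x \end{pmatrix} = \lambda_i(A_n) \begin{pmatrix} v \\ x \end{pmatrix}$ in block form yields the two relations
$$ A_{n-1} v + x X = \lambda_i(A_n) v, \qquad X^\ast v + d\,x = \lambda_i(A_n) x. $$
Only the first relation will be needed. I would rewrite it as $(A_{n-1} - \lambda_i(A_n) I) v = -x X$. Since $\lambda_i(A_n)$ is assumed not to be an eigenvalue of $A_{n-1}$, the operator $A_{n-1} - \lambda_i(A_n) I$ is invertible on $\mathbb{C}^{n-1}$, so we can solve
$$ v = -x (A_{n-1} - \lambda_i(A_n) I)^{-1} X. $$

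Next I would use the unit-norm condition $\|v\|^2 + |x|^2 = 1$, which combined with the displayed equation gives $1 = |x|^2\bigl( 1 + \|(A_{n-1} - \lambda_i(A_n) I)^{-1} X\|^2 \bigr)$. To evaluate the inverse norm, I would expand $X$ in the orthonormal eigenbasis $v_1(A_{n-1}), \ldots, v_{n-1}(A_{n-1})$ of $A_{n-1}$, so that
$$ (A_{n-1} - \lambda_i(A_n) I)^{-1} X = \sum_{j=1}^{n-1} \frac{v_j(A_{n-1})^\ast X}{\lambda_j(A_{n-1}) - \lambda_i(A_n)}\, v_j(A_{n-1}), $$
and then take squared norms via Parseval to obtain
$$ \|(A_{n-1} - \lambda_i(A_n) I)^{-1} X\|^2 = \sum_{j=1}^{n-1} \frac{|v_j(A_{n-1})^\ast X|^2}{(\lambda_j(A_{n-1}) - \lambda_i(A_n))^2}. $$
Substituting this into the identity above and solving for $|x|^2$ yields the claimed formula.

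For the final assertion that $x \neq 0$, I would argue by contradiction: if $x = 0$, then the block equation reduces to $A_{n-1} v = \lambda_i(A_n) v$, and since $(v,x)$ is a unit vector we must have $v \neq 0$; this would make $\lambda_i(A_n)$ an eigenvalue of $A_{n-1}$, contradicting the hypothesis. Alternatively, one can simply read off $x \neq 0$ from the displayed formula, since the right-hand side is strictly positive. No step here looks like a genuine obstacle; the only subtlety is keeping track of the hypothesis that $\lambda_i(A_n)$ is not an eigenvalue of $A_{n-1}$, which is needed both to invert the shifted operator and to ensure the sum in the denominator is well-defined.
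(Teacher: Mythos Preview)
Your argument is correct and is exactly the standard proof of this identity. Note that the paper does not supply its own proof of this lemma; it simply cites \cite{ESY} and \cite[Lemma~41]{TVuniv}, so there is nothing further to compare against.
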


The above lemma implies that if $\lambda_j(A_{n-1}) - \lambda_i(A_n)\not=0$ for all $i,j$, then $e_n$ is not orthogonal to any eigenvector of $A_n$.  The choice of the $n$th minor (and the $n$th entry of an eigenvector) is arbitrary; indeed, the result can be applied to any minor by conjugating the matrix $A_n$ by a permutation matrix.  

We will use the following lemma in order to show that $\lambda_j(A_{n-1}) - \lambda_i(A_n)\not=0$ for all $i,j$.  
\begin{lemma} \label{lemma:nonzero}
Let
$$ A_n = \begin{pmatrix} A_{n-1} & X \\ X^\ast & d \end{pmatrix} $$
be an $n \times n$ Hermitian matrix, where $A_{n-1}$ is the upper-left $(n-1) \times (n-1)$ minor of $A_{n}$, $X \in \mathbb{C}^{n-1}$, and $d \in \mathbb{R}$.  If an eigenvalue of $A_n$ corresponds with an eigenvalue of $A_{n-1}$, then there exists a unit eigenvector $w$ of $A_{n-1}$ such that $X^\ast w = 0$.
\end{lemma}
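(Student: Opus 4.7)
The plan is to fix a shared eigenvalue $\lambda$ of $A_n$ and $A_{n-1}$, pick a unit eigenvector $\binom{v}{x}$ of $A_n$ with eigenvalue $\lambda$ (where $v \in \mathbb{C}^{n-1}$ and $x \in \mathbb{C}$), and exploit the Hermitian block structure. Writing out $A_n \binom{v}{x} = \lambda \binom{v}{x}$ yields the two equations
\[
A_{n-1} v + x X = \lambda v, \qquad X^\ast v + dx = \lambda x.
\]
I would split into two cases according to whether $x$ vanishes.

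In the case $x = 0$, the eigenvalue equation collapses to $A_{n-1} v = \lambda v$, while the bottom row gives $X^\ast v = 0$ for free. Since $\binom{v}{0}$ is nonzero, $v \neq 0$, so $w := v/\|v\|$ is a unit eigenvector of $A_{n-1}$ satisfying $X^\ast w = 0$.

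In the case $x \neq 0$, I would rewrite the top equation as $(A_{n-1} - \lambda I) v = -x X$. Using the hypothesis that $\lambda$ is an eigenvalue of $A_{n-1}$, pick any unit eigenvector $u$ of $A_{n-1}$ with eigenvalue $\lambda$. Since $A_{n-1}$ is Hermitian, $u^\ast (A_{n-1} - \lambda I) = 0$; multiplying the displayed equation on the left by $u^\ast$ then gives $0 = -x \, u^\ast X$. Since $x \neq 0$, this forces $u^\ast X = 0$, equivalently $X^\ast u = 0$, so $w := u$ does the job.

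I do not expect any significant obstacle: the statement is essentially an observation about block Hermitian matrices, and the only mild subtlety is noticing that in the first case $v$ is automatically nonzero, while in the second case the Hermitian symmetry is what transforms the orthogonality to the range of $A_{n-1} - \lambda I$ into the desired orthogonality to $X$.
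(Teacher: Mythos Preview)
Your proof is correct and follows essentially the same approach as the paper's: both write out the block eigenvalue equations, split into the cases $x=0$ and $x\neq 0$, and in the latter case left-multiply by a unit eigenvector $u$ of $A_{n-1}$ to extract $u^\ast X=0$. The only cosmetic differences are the order in which the case split and the multiplication by $u^\ast$ occur, and that in your $x=0$ case the normalization of $v$ is unnecessary since $\|v\|^2+|x|^2=1$ already forces $\|v\|=1$.
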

\begin{proof}
Assume $\lambda$ is an eigenvalue of $A_{n}$ with unit eigenvector $\begin{pmatrix} v \\ q \end{pmatrix}$, where $v \in \mathbb{C}^{n-1}$ and $q \in \mathbb{C}$.  In addition, assume $\lambda$ is an eigenvalue of $A_{n-1}$ with unit eigenvector $u \in \mathbb{C}^{n-1}$.  From the eigenvalue equation
$$ A_n \begin{pmatrix} v \\ q \end{pmatrix} = \lambda \begin{pmatrix} v \\ q \end{pmatrix}, $$
we obtain
\begin{align}
	A_{n-1} v + q X &= \lambda v, \label{eq:eig1} \\
	X^\ast v + qd &= \lambda q. \label{eq:eig2}
\end{align}
Since $u^\ast A_{n-1} = \lambda u^\ast$, we multiply \eqref{eq:eig1} on the left by $u^\ast$ to obtain $q u^\ast X = 0$.  In other words, either $u^\ast X = 0$ or $q = 0$.

If $u^\ast X = 0$, then the proof is complete (since $u$ is a unit eigenvector of $A_{n-1}$).  Assume $q = 0$.  Then $v$ is a unit vector, and equations \eqref{eq:eig1} and \eqref{eq:eig2} imply that
\begin{align*}
	A_{n-1} v &= \lambda v, \\
	X^\ast v &= 0.
\end{align*}
In other words, $v$ is a unit eigenvector of $A_{n-1}$ and $X^\ast v = 0$.
\end{proof}

\subsection{Tools from probability theory}

We present some probabilistic results we will need to prove our main results.  The first lemma below is a standard result; we include a proof for completeness.

\begin{lemma} \label{lemma:uniform}
Let $v \in \mathbb{R}^n$ be a unit vector, and assume $u$ is a random vector uniformly distributed on the unit sphere in $\mathbb{R}^n$.  Then
$$ \Prob \left( v^\mathrm{T} u = 0 \right) = 0. $$
\end{lemma}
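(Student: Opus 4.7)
The plan is to reduce the statement to a one-dimensional marginal calculation via the rotational invariance of the uniform measure on the sphere. First, I would handle the trivial case $n=1$ separately: there the unit sphere consists of the two points $\{-1,+1\}$, so $v^\mathrm{T} u \in \{-1,+1\}$ is never zero and the probability in question equals $0$.

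For $n \geq 2$, I would exploit that the uniform distribution on $S^{n-1}$ is invariant under the orthogonal group. Pick any orthogonal matrix $R$ with $Rv = e_1$; then $u$ has the same distribution as $R^\mathrm{T} \tilde u$ for $\tilde u$ uniform on $S^{n-1}$, so
\[
\Prob(v^\mathrm{T} u = 0) = \Prob(v^\mathrm{T} R^\mathrm{T} \tilde u = 0) = \Prob((Rv)^\mathrm{T} \tilde u = 0) = \Prob(\tilde u_1 = 0).
\]
It therefore suffices to show that the first coordinate of a uniform vector on the sphere does not hit any single point with positive probability.

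To finish, I would note that $\tilde u$ can be realized as $g/\|g\|$, where $g = (g_1,\ldots,g_n)$ has iid standard Gaussian entries. Then $\tilde u_1 = 0$ if and only if $g_1 = 0$, and since $g_1$ has an absolutely continuous distribution, $\Prob(g_1 = 0) = 0$. (Equivalently, one may invoke the classical fact that the marginal density of $\tilde u_1$ on $[-1,1]$ is proportional to $(1-t^2)^{(n-3)/2}$, which is absolutely continuous for $n \geq 2$, so any single value is hit with probability zero; or observe geometrically that $\{u \in S^{n-1} : v^\mathrm{T} u = 0\}$ is an equatorial $(n-2)$-sphere of surface measure zero.)

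No step here is a real obstacle; the only mild subtlety is bookkeeping the $n=1$ degeneracy, since the argument via the Gaussian representation requires $n \geq 2$ to conclude $\Prob(\tilde u_1 = 0) = 0$ from absolute continuity.
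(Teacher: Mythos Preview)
Your proposal is correct and follows essentially the same route as the paper's proof: reduce by rotational invariance to the first coordinate, then use the Gaussian representation $u \sim g/\|g\|$ to conclude $\Prob(u_1=0)=\Prob(g_1=0)=0$. The separate treatment of $n=1$ is harmless but unnecessary, since the Gaussian-representation step already yields $\Prob(g_1=0)=0$ for all $n\geq 1$; the absolute-continuity caveat only pertains to your alternative marginal-density argument, not to the main line.
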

\begin{proof}
Observe that for any $n \times n$ orthogonal matrix $U$, $v^\mathrm{T} u = (Uv)^{\mathrm{T}} (Uu)$ and $Uu$ has the same distribution as $u$.  Thus, it suffices to show that
$$ \Prob \left( e_1^\mathrm{T} u = 0\right) = 0. $$
We note that $u$ has the same distribution as
$$ \frac{1}{\sqrt{ \sum_{k=1}^n \xi_k^2}} \left( \xi_1, \ldots, \xi_n \right), $$
where $\xi_1, \ldots, \xi_n$ are iid standard normal random variables.  Indeed, this follows since the normal vector $(\xi_1, \ldots, \xi_n)$ is rotationally invariant.  As $\sum_{k=1}^n \xi_k^2 > 0$ with probability one, we find that
$$ \Prob \left( e_1^\mathrm{T} u = 0 \right) = \Prob \left( \xi_1 = 0 \right) = 0, $$
and the proof is complete.
\end{proof}

We will also need the following bound for the spectral norm of a Wigner matrix.

\begin{lemma}[Spectral norm of a Wigner matrix] \label{lemma:norm}
Let $\xi$ be a sub-gaussian random variable with mean zero and unit variance; let $\zeta$ be a (possibly degenerate) sub-gaussian random variable with mean zero.  Assume $W$ is an $n \times n$ Wigner random matrix with atom variables $\xi$ and $\zeta$.  Then there exists constants $C_0, c_0 > 0$ (depending only on the sub-gaussian moments of $\xi$ and $\zeta$) such that $\|W\| \leq C_0 \sqrt{n}$ with probability at least $1 - C_0 \exp(-c_0 n)$.
\end{lemma}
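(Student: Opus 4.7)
The plan is to use a standard $\epsilon$-net argument, combined with a sub-gaussian concentration bound for fixed quadratic forms. Let $\mathcal{N}$ be a $(1/4)$-net of the unit sphere $S^{n-1}$ in $\R^n$. By a standard volume comparison, one may choose $\mathcal{N}$ with $|\mathcal{N}| \leq 9^n$. An elementary approximation argument (using that $W$ is symmetric) yields a bound of the form
\[
\|W\| \leq C \max_{x, y \in \mathcal{N}} |x^{\mathrm T} W y|
\]
for some absolute constant $C > 0$. It therefore suffices to control $|x^{\mathrm T} W y|$ uniformly over the net.

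For any fixed pair $x, y \in S^{n-1}$, I would expand
\[
x^{\mathrm T} W y = \sum_{i=1}^n w_{ii} x_i y_i + \sum_{1 \leq i < j \leq n} w_{ij} (x_i y_j + x_j y_i),
\]
which is a linear combination of the independent sub-gaussian entries $\{w_{ij} : i \leq j\}$. The sum of squares of the coefficients is
\[
\sum_{i} x_i^2 y_i^2 + \sum_{i<j} (x_i y_j + x_j y_i)^2 \leq 2\|x\|^2 \|y\|^2 = 2.
\]
Since a linear combination of independent sub-gaussian random variables is itself sub-gaussian with sub-gaussian moment controlled by the sum of squares of the coefficients, the Hoeffding-type inequality for sub-gaussian sums yields constants $c_1, c_2 > 0$, depending only on the sub-gaussian moments of $\xi$ and $\zeta$, such that
\[
\Prob\bigl( |x^{\mathrm T} W y| > t \bigr) \leq c_1 \exp(-c_2 t^2)
\]
for every $t > 0$.

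With the pointwise bound in hand, I would take a union bound over the at most $81^n$ pairs in $\mathcal{N} \times \mathcal{N}$ and choose $t = C_0 \sqrt{n}$ with $C_0$ large enough that $c_2 C_0^2 / C^2$ dominates the entropy term $\log 81$. This gives
\[
\Prob\bigl( \|W\| > C_0 \sqrt{n} \bigr) \leq 81^n \cdot c_1 \exp\bigl( -c_2 (C_0/C)^2 n \bigr) \leq C_0 \exp(-c_0 n)
\]
for appropriate $C_0, c_0 > 0$ depending only on the sub-gaussian moments.

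There is no real obstacle here; the only step that requires mild care is verifying the sub-gaussian Hoeffding inequality and the net reduction, both of which are classical. In fact, this estimate is standard in the non-asymptotic random matrix theory literature, and one could alternatively invoke it directly from a textbook treatment (e.g., via the moment method, bounding $\E[\tr(W^{2k})]$ by Catalan-type counts and applying Markov's inequality at $k \sim n$), rather than re-deriving it.
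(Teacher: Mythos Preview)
Your proof is correct and follows essentially the same approach as the paper's: the paper first invokes a sub-gaussian Hoeffding-type bound (citing \cite[Proposition~5.10]{V}) to get $\Prob(|u^{\mathrm T} W v| \geq t) \leq C\exp(-ct^2)$ for fixed unit vectors, and then appeals to a standard $\varepsilon$-net argument (citing \cite[Fact~11]{OVW}) to conclude. You have simply filled in the details of both cited steps.
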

\begin{proof}
It follows from \cite[Proposition 5.10]{V} that, for any unit vectors $u,v \in \mathbb{R}^n$ and any $t > 0$, 
$$ \Prob \left( |u^\mathrm{T} W v | \geq t \right) \leq C \exp(-c t^2), $$
where $C,c > 0$ depend only on the sub-gaussian moments of $\xi$ and $\zeta$.  Thus, by a standard net argument (see \cite[Fact 11]{OVW} for details), we obtain
$$ \Prob \left( \|W \| \geq C' \sqrt{n} \right) \leq C' \exp(-c ' n) $$
for some constants $C', c' > 0$ depending only on the sub-gaussian moments of $\xi$ and $\zeta$.  
\end{proof}

\section{Proofs of the Main Results}\label{sec:proof}
\subsection{Proof of results in Section \ref{sec:wigdet}} \label{sec:wigpf}

This section is devoted to the proof of Theorem \ref{thm:wigner} and Corollary \ref{cor:adj}.  In view of Lemma \ref{lemma:simplenz}, it suffices to show that, with high probability, the spectrum of $M:=W + F$ is simple and $v_j(M)^\mathrm{T} e_i \neq 0$ for all $1 \leq j \leq n$.  It was recently shown in \cite{NTV} that $M$ has simple spectrum with high probability.

\begin{theorem}[Theorem 2.6 from \cite{NTV}] \label{thm:simple}
Let $\xi$ be a sub-gaussian random variable with mean zero and unit variance; let $\zeta$ be a (possibly degenerate) sub-gaussian random variable with mean zero.  Assume $W$ is an $n \times n$ Wigner random matrix with atom variables $\xi$ and $\zeta$.  Let $F$ be an $n \times n$ deterministic real symmetric matrix such that $\|F\| \leq n^{\gamma}$ for some $\gamma > 0$.  Then, for any $\alpha > 0$, there exists $C > 0$ (depending only on $\alpha$, $\gamma$, $\xi$, and $\zeta$) such that $W + F$ has simple spectrum with probability at least $1 - Cn^{-\alpha}$.
\end{theorem}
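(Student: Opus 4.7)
The plan is to reduce simple spectrum of $M := W + F$ to a small-ball (anti-concentration) estimate for a single random vector. Since the spectrum fails to be simple only if $\lambda_i(M) = \lambda_j(M)$ for some $1 \le i < j \le n$, a union bound over the $\binom{n}{2}$ pairs reduces matters to showing, for each fixed pair, that $\Prob(\lambda_i(M) = \lambda_j(M)) = O(n^{-\alpha - 2})$.

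I would then exploit the block decomposition
$$ M = \begin{pmatrix} M_{n-1} & X \\ X^\mathrm{T} & d \end{pmatrix}, $$
in which $M_{n-1}$ is the upper-left $(n-1)\times(n-1)$ minor and $X \in \R^{n-1}$ carries the sub-gaussian random entries of the $n$-th column of $W$, translated by the deterministic shift coming from $F$. By Cauchy's interlacing theorem, $\lambda_i(M) = \lambda_j(M)$ forces $\lambda_k(M_{n-1}) = \lambda_i(M)$ for every $i \le k \le j-1$; in particular $M$ and $M_{n-1}$ share an eigenvalue, and Lemma~\ref{lemma:nonzero} then yields a unit eigenvector $w$ of $M_{n-1}$ with $X^\mathrm{T} w = 0$. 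Because $X$ is independent of $M_{n-1}$, after conditioning on $M_{n-1}$ the problem becomes uniformly bounding the small-ball probability $\sup_{t \in \R} \Prob(\xi_1 w_1 + \cdots + \xi_{n-1} w_{n-1} = t)$ over all unit eigenvectors $w$ of $M_{n-1}$.

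The key probabilistic input is the inverse Littlewood--Offord machinery of Tao--Vu, sharpened in \cite{NTV}: either this small-ball probability is bounded by $n^{-A}$ for any desired $A$, or the coefficient vector $w$ must have rigid arithmetic structure (for instance, a very small essential least common denominator, or proximity to a low-dimensional generalized arithmetic progression). The main obstacle, and the technical heart of the argument, is ruling out this structured alternative---that is, showing that with probability at least $1 - C n^{-\alpha}$ no unit eigenvector of $M_{n-1}$ has such structure. This is accomplished by an iterated conditioning/bootstrap scheme that re-applies the same reasoning to a further minor of $M_{n-1}$, together with delocalization estimates for eigenvectors of sub-gaussian Wigner matrices. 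The spectral norm bound from Lemma~\ref{lemma:norm} is used to discretize the range of candidate eigenvalues when summing the resulting bounds over $t$, and a final union bound over the pair $(i,j)$ and over the choice of which row and column is removed completes the argument.
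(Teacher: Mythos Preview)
The paper does not prove this theorem; it is quoted directly from Nguyen--Tao--Vu \cite{NTV} and used as a black box, so there is no ``paper's own proof'' to compare against. What you have sketched is, in outline, the argument of \cite{NTV} itself, and the main ideas are right: reduce simple spectrum to ruling out a shared eigenvalue between $M$ and a principal minor via interlacing, use Lemma~\ref{lemma:nonzero} to convert that into $X^{\mathrm{T}} w = 0$ for some unit eigenvector $w$ of the minor, exploit the independence of $X$ and the minor, and then invoke inverse Littlewood--Offord theory to bound the resulting small-ball probability.

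Two minor redundancies in your reduction: you need only union-bound over the $n-1$ consecutive pairs $(i,i+1)$, not all $\binom{n}{2}$ pairs, since non-simplicity already forces some consecutive eigenvalues to coincide; and there is no union bound over which row and column to delete---one simply fixes (say) the last one. More substantively, the step you correctly flag as the technical heart---showing that with high probability no unit eigenvector of the minor has the rigid arithmetic structure forced by the inverse Littlewood--Offord alternative---is exactly the content of Theorem~\ref{thm:smallball} (i.e.\ \cite[Theorem~4.3]{NTV}) applied to the minor. Your phrase ``iterated conditioning/bootstrap scheme together with delocalization estimates'' is an accurate one-line summary of what \cite{NTV} does there, but it conceals the bulk of the work; a complete proof would have to reproduce or cite that machinery rather than gesture at it.
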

\begin{remark}
\cite[Theorem 2.6]{NTV} is actually more general than what is stated above.  Indeed, the results in \cite{NTV} also control the gaps between the eigenvalues $\lambda_{i+1}(W+F) - \lambda_{i}(W+F)$ for all $1 \leq i \leq n-1$.
\end{remark}

We now verify the following result for the eigenvectors of $M:=W+F$.

\begin{lemma} \label{lemma:eigenvectors}
Let $\xi$ be a sub-gaussian random variable with mean zero and unit variance; let $\zeta$ be a (possibly degenerate) sub-gaussian random variable with mean zero.  Assume $W$ is an $n \times n$ Wigner random matrix with atom variables $\xi$ and $\zeta$.  Let $F$ be an $n \times n$ deterministic real symmetric matrix such that $\|F\| \leq n^{\gamma}$ for some $\gamma > 0$.  Then, for any $\alpha > 0$, there exists $C > 0$ (depending only on $\alpha$, $\gamma$, $\xi$, and $\zeta$) such that,
$$ \Prob \left( \exists i,j \in \{1, \ldots, n\} \text{ such that } v_j(M)^\mathrm{T} e_i = 0 \right) \leq C n^{-\alpha}, $$
where $M:=W+F$.
\end{lemma}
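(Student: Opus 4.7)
The plan is to reduce the problem, via the last-row/last-column minor decomposition, to a small-ball probability estimate for a linear combination of independent sub-Gaussian random variables. By symmetry---for any permutation matrix $P$, $PMP^T$ has the same law as $W + PFP^T$ and $\|PFP^T\| = \|F\|$---a union bound over $i \in \{1,\ldots,n\}$ reduces the lemma to showing
$$ \Prob\bigl(\exists\, j: v_j(M)^T e_n = 0\bigr) \leq C n^{-\alpha - 1}. $$
Writing the block decomposition
$$ M = \begin{pmatrix} M' & X \\ X^T & d \end{pmatrix}, $$
with $M'$ the upper-left $(n-1) \times (n-1)$ minor, $X \in \R^{n-1}$ the off-diagonal part of the last column, and $d = M_{nn}$, the independence of the entries of $W$ together with the determinism of $F$ guarantees that $M'$ and $X$ are independent.

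Next, I would combine Lemmas \ref{lemma:coordinate} and \ref{lemma:nonzero}. By the contrapositive of Lemma \ref{lemma:nonzero}, if $X^T w \neq 0$ for every unit eigenvector $w$ of $M'$, then the spectra of $M$ and $M'$ are disjoint; Lemma \ref{lemma:coordinate} then gives $v_j(M)^T e_n \neq 0$ for every $j$. Applying Theorem \ref{thm:simple} to the smaller matrix $M' = W' + F'$ (using $\|F'\| \leq \|F\| \leq n^\gamma$) shows that $M'$ has simple spectrum---and hence exactly $n-1$ unit eigenvectors up to sign---with probability at least $1 - C' n^{-\alpha - 2}$. On this event, one more union bound over $j \in \{1,\ldots,n-1\}$ reduces the problem to establishing
$$ \sup_{1 \leq j \leq n-1} \Prob\bigl(v_j(M')^T X = 0\bigr) \leq C'' n^{-\alpha - 2}. $$

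The main obstacle is precisely this small-ball estimate. Conditioning on $M'$, the vector $v := v_j(M')$ becomes a deterministic unit vector, while $X = \xi' + \mu$ splits into an independent random part $\xi' = (\xi'_1, \ldots, \xi'_{n-1})$ of iid copies of $\xi$ and a deterministic shift $\mu = (F_{k,n})_{k=1}^{n-1}$. Hence
$$ \Prob\bigl(v^T X = 0 \,\big|\, M'\bigr) \leq \sup_{t \in \R} \Prob\Bigl( \sum_{k=1}^{n-1} v_k \xi'_k = t \Bigr). $$
A direct application of the classical Erd\H{o}s--Littlewood--Offord inequality yields only an $O(n^{-1/2})$ bound, which is insufficient after the two preceding union bounds. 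To reach the polynomial rate $n^{-\alpha-2}$, the proof should combine the inverse Littlewood--Offord machinery of Nguyen--Tao--Vu with eigenvector delocalization for $M'$: if the small-ball probability of a unit vector $v$ exceeds $n^{-A}$, then $v$ must lie in a rigid arithmetic structure, and the eigenvectors of the random matrix $W' + F'$ avoid such structures except on an event of probability $O(n^{-\alpha-2})$. Handling the exceptional events (the norm bound from Lemma \ref{lemma:norm}, simple spectrum from Theorem \ref{thm:simple}, and the anti-concentration exceptional set) on sets of negligible probability and then combining all the estimates through the independence of $M'$ and $X$ produces the claimed $Cn^{-\alpha}$ bound.
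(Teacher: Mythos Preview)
Your proposal is correct and follows essentially the same route as the paper's proof: the minor decomposition, the combination of Lemmas~\ref{lemma:coordinate} and~\ref{lemma:nonzero} to reduce to a small-ball estimate for $X^{\mathrm T}v$ with $v$ a unit eigenvector of $M'$, the independence of $M'$ and $X$, and finally the inverse Littlewood--Offord input from Nguyen--Tao--Vu. The anti-concentration step you isolate as the main obstacle is exactly what the paper invokes as Theorem~\ref{thm:smallball} (\cite[Theorem~4.3]{NTV}), which furnishes the bound $\rho_{\xi,n^{-\beta}}(v)\le Cn^{-\alpha-2}$ for every unit eigenvector $v$ of $M'$ outside an exceptional event; your informal description of that result is accurate.
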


In order to prove Lemma \ref{lemma:eigenvectors}, we will need the following notation and results.  Let $x = (x_k)_{k=1}^n \in \mathbb{R}^n$ and let $\xi$ be a real-valued random variable.  Consider the \emph{small ball probability}
\begin{equation} \label{eq:def:smallball}
	\rho_{\xi,\delta}(x) := \sup_{a \in \mathbb{R}} \Prob \left( |\xi_1 x_1 + \cdots \xi_n x_n - a | \leq \delta \right),
\end{equation}
where $\xi_1, \ldots, \xi_n$ are iid copies of $\xi$.  In particular, even if the vector $x$ is random, we assume $\xi_1, \ldots, \xi_n$ are independent of $x$ and the probability above is taken with respect to the random variables $\xi_1, \ldots, \xi_n$.  In other words, we condition on (or freeze) the coordinates of $x$ when we compute the small ball probability in \eqref{eq:def:smallball}.

We will need the following result concerning the small ball probability when $x$ happens to be an eigenvector of a Wigner matrix.

\begin{theorem}[Theorem 4.3 from \cite{NTV}] \label{thm:smallball}
Let $\xi$ be a sub-gaussian random variable with mean zero and unit variance; let $\zeta$ be a (possibly degenerate) sub-gaussian random variable with mean zero.  Assume $W$ is an $n \times n$ Wigner random matrix with atom variables $\xi$ and $\zeta$.  Let $F$ be an $n \times n$ deterministic real symmetric matrix such that $\|F\| \leq n^{\gamma}$ for some $\gamma > 0$.  Then, for any $\alpha > 0$, there exists $\beta, \beta_0, C, C_0 > 0$ (depending only on $\alpha$, $\gamma$, $\xi$, and $\zeta$) such that the following holds with probability at least $1 - C_0 \exp(-\beta_0 n)$: every unit eigenvector $v$ of $W + F$ obeys the anti-concentration estimate
$$ \rho_{\xi, n^{-\beta}}(v) \leq C n^{-\alpha}. $$
\end{theorem}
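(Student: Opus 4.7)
I would follow the inverse Littlewood--Offord (ILO) plus net strategy of Rudelson--Vershynin, adapted in the Nguyen--Tao--Vu framework to handle Wigner-plus-deterministic ensembles. The key reduction is the contrapositive of a sharp ILO theorem: any unit vector $v$ with $\rho_{\xi, n^{-\beta}}(v) > C n^{-\alpha}$ must be \emph{structured}---either $v$ is compressible (essentially supported on $o(n)$ coordinates) or its essential least common denominator at scale $n^{-\beta}$ is small, forcing $v$ near a generalized arithmetic progression of controlled dimension and volume. Let $\mathcal{S} = \mathcal{S}_{\alpha,\beta}$ denote this structured subset of the unit sphere. It then suffices to show that, with probability at least $1 - C_0 \exp(-\beta_0 n)$, no unit eigenvector of $M := W + F$ belongs to $\mathcal{S}$.

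\textbf{Main steps.} Dispatch the compressible case first via a delocalization argument: condition on the minor obtained by removing a row/column and apply Lemma \ref{lemma:coordinate} together with a small-ball estimate on the restored row to rule out eigenvectors with $o(n)$ effective support. For the incompressible structured case, use the ILO output to build an $\varepsilon$-net $\mathcal{N} \subset \mathcal{S}$ of cardinality at most $\exp(c_0 n)$, where $\varepsilon = n^{-C}$ for large $C$ and $c_0$ can be made arbitrarily small by tuning $\beta$ in terms of $\alpha$. For each fixed $v_0 \in \mathcal{N}$ I would then establish a rigidity bound
$$ \min_{\mu \in \mathbb{R}} \|(M - \mu I) v_0\| \geq n^{1/2 - o(1)} $$
with probability at least $1 - \exp(-c_1 n)$ for some $c_1 > c_0$. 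This bound, which says $v_0$ is far from being an eigenvector, follows from a Hanson--Wright concentration inequality: $v_0^{\mathrm{T}} W v_0$ concentrates exponentially around $0$, while $\|W v_0 - (v_0^{\mathrm{T}} W v_0) v_0\|$ concentrates at $\Theta(\sqrt{n})$ for incompressible $v_0$, and the deterministic contribution from $F$ is controlled by $\|F\| \leq n^\gamma$. A union bound over $\mathcal{N}$ combined with a perturbation argument using Lemma \ref{lemma:norm} (to transfer rigidity from net points to all of $\mathcal{S}$ at resolution $\varepsilon$) finishes the proof.

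\textbf{Main obstacle.} The essential difficulty is the simultaneous tuning of parameters: the anti-concentration scale $n^{-\beta}$, the ILO structural threshold, the net resolution $\varepsilon$, and the concentration exponent from Hanson--Wright all interact in the final union bound. The metric entropy of $\mathcal{S}_{\alpha,\beta}$ (which grows with $\alpha$ through the volume bounds on the GAPs produced by ILO) must stay strictly below the $\Omega(n)$ exponent of the rigidity step, and this forces $\beta$ to be chosen quite small relative to $\alpha$. The deterministic shift by $F$ is a mild nuisance rather than a genuine obstacle, since $\|F\| \leq n^\gamma$ is polynomial and only alters mean values inside the quadratic form; however, one does need the ILO estimate in its strongest (Nguyen--Tao--Vu) quadratic-gain form so that the constants depend only on the target polynomial loss $n^{-\alpha}$ and on $\gamma$, and not on hidden arithmetic structure of the perturbation.
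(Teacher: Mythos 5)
This theorem is not proved in the paper at all: it is imported verbatim (up to a cosmetic weakening) from \cite[Theorem~4.3]{NTV}, and the only original content the paper supplies is the one-line remark that the hypothesis in \cite{NTV} requiring the eigenvalues of $W+F$ to be polynomially bounded can be dropped, because $\|W+F\|\le\|W\|+n^\gamma$ together with Lemma~\ref{lemma:norm} supplies that bound with overwhelming probability. Your proposal, by contrast, is a blueprint for re-proving the NTV theorem from scratch. That is a legitimate thing to attempt, but it should be flagged as solving a different (and far harder) problem than what the paper actually does here.

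As a reconstruction of the NTV argument, your outline is in the right spirit at the top level --- the paper itself says the NTV proof ``is based on the inverse Littlewood--Offord theory'' --- and your awareness of the entropy/rigidity tension and of the need to make $\beta$ small relative to $\alpha$ is the right instinct. But several steps as written would not go through without substantial repair. First, you invoke Lemma~\ref{lemma:coordinate} to ``dispatch the compressible case''; that lemma is a deterministic eigenvector-coordinate formula used elsewhere in this paper for a different purpose and is not a delocalization statement, so it cannot on its own rule out compressible eigenvectors. Second, your rigidity bound $\min_\mu\|(M-\mu I)v_0\|\gtrsim n^{1/2-o(1)}$ must cope with the deterministic term $Fv_0-\mu v_0$: when $\gamma>1/2$ this term can dominate $\sqrt{n}$, so the claim as stated is false without first subtracting it off and arguing anti-concentration of $Wv_0$ around an \emph{arbitrary} deterministic vector, and without controlling the range of $\mu$ (which itself requires the very eigenvalue bound the paper observes can be recovered from Lemma~\ref{lemma:norm}). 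Third, the Hanson--Wright step you sketch controls $v_0^{\mathrm T}Wv_0$, but the object you actually need to lower-bound is the full residual vector $\|Wv_0-w\|$ uniformly over a one-parameter family of shifts $w$; this is a tensorized small-ball estimate over $n$ nearly-independent coordinates, not a single quadratic-form concentration, and the constant in the exponent is exactly what must beat the net entropy. In short: as a proof of the cited theorem the proposal has genuine gaps, and as an account of what the paper proves it misidentifies the task --- the paper's contribution at this point is only the reduction via Lemma~\ref{lemma:norm}, not a proof of the anti-concentration estimate itself.
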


The proof of Theorem \ref{thm:smallball} presented in \cite{NTV} is based on the \emph{inverse Littlewood--Offord theory} introduced in \cite{TVlo}; see \cite{NV} for a survey.   In broad terms, Nguyen, Tao, and Vu establish Theorem \ref{thm:smallball} using the following procedure.  It is first shown that if $\rho_{\xi, \delta}(x) > C n^{-\alpha}$, then the vector $x$ must have a rich additive structure.  Intuitively, this follows since the sum
$$ \sum_{k=1}^n \xi_k x_k $$
can only be concentrated on a given value when most of the coefficients $x_k$ are arithmetically well comparable.  On the other hand, if we take $x$ to be an eigenvector of a Wigner random matrix, then one expects the vector $x$ to look random and not have any rigid structure.  This explains the intuition behind Theorem \ref{thm:smallball}.

We also note that Theorem \ref{thm:smallball} is a slight extension of \cite[Theorem 4.3]{NTV}.  We have not included the assumption that the eigenvalues of $W+F$ are no larger than some polynomial in $n$ since this follows immediately from the bound $\|W + F\| \leq \|W\| + n^{\gamma}$ and Lemma \ref{lemma:norm}.

We now prove Lemma \ref{lemma:eigenvectors}.

\begin{proof}[Proof of Lemma \ref{lemma:eigenvectors}]
Set $M := W + F$, and let $\alpha > 0$.  Fix $1 \leq i, j \leq n$.  We will show that
$$ \Prob \left( v_j(M)^\mathrm{T} e_i = 0 \right) \leq C n^{-\alpha - 2}. $$
The claim will then follow from the union bound over $1 \leq i,j \leq n$.

Let $M'$, $W'$, $F'$ denote the $(n-1) \times (n-1)$ sub-matrices of $M$, $W$, $F$ formed by removing the $i$th row and $i$th column.  Let $X, Y, Z$ denote the $i$th columns of $M, W, F$ with the $i$th entries removed.  In particular, $M' = W' + F'$ and $X = Y + Z$.  In addition, the entries of $Y$ are iid copies of $\xi$.

Suppose $v_j(M)^\mathrm{T} e_i = 0$.  Then, by Lemma \ref{lemma:coordinate}, it must be the case that one of the eigenvalues of $M'$ corresponds to $\lambda_j(M)$.  Thus, by Lemma \ref{lemma:nonzero}, there exists a unit eigenvector $v$ of $M'$ such that $X^\mathrm{T}v = 0$.  Therefore, it suffices to show that
$$ \Prob \left( \text{there exists a unit eigenvector } v \text{ of } M' \text{ such that } X^\mathrm{T}v = 0 \right) \leq C n^{-\alpha - 2}. $$

Observe that $M'$ and $X$ are independent.  In other words, conditioning on $M'$ does not affect $X$.  Also, recall the definition of the small ball probability given in \eqref{eq:def:smallball}.  Let $\beta, C$ be positive constants to be chosen later.  For a fixed realization of $M'$, we say that $M'$ is in a $(\beta,C)$-good configuration if
$$ \rho_{\xi, n^{-\beta}}(v) \leq C n^{-\alpha - 2} $$
for all unit eigenvectors $v$ of $M'$.  Let $\Omega_{\beta,C}$ be the event where the spectrum of $M'$ is simple and $M'$ is in a $(\beta,C)$-good configuration. Theorems \ref{thm:simple} and \ref{thm:smallball} imply that there exists $\beta, C > 0$ such that $\Prob \left( \Omega_{\beta,C}^c \right) \leq C n^{-\alpha-3}$.

For notational convenience, let $S$ denote the set of unit eigenvectors $M'$.  When the eigenvalues of $M'$ are simple, the (unit) eigenvectors of $M'$ are uniquely determined up to sign.  Thus, on the event $\Omega_{\beta,C}$, $S$ is a set consisting of $n-1$ orthonormal vectors (where the choice of sign for each vector is arbitrary).  Moreover, for a fixed realization of $M'$ and for a fixed vector $v \in S$, $X^\mathrm{T} v = 0$ implies that $|Y^\ast v - a| \leq n^{-\beta}$ for some $a \in \mathbb{R}$.  In other words,
$$ \Prob \left( X^\mathrm{T} v = 0 \right) \leq \rho_{\xi, n^{-\beta}}(v). $$
Thus, by conditioning on $\Omega_{\beta,C}$, we apply the union bound over all vectors in $S$ to obtain
\begin{align*}
	\Prob \left( \exists v \in S \text{ such that } X^\mathrm{T}v = 0 \right) &\leq \Prob \left( \exists v \in S \text{ such that } X^\mathrm{T}v = 0 | \Omega_{\beta,C} \right) + \Prob(\Omega_{\beta,C}^c) \\
		&\leq C n n^{-\alpha - 3} + C n^{-\alpha - 3}
\end{align*}
and the proof of the lemma is complete.
\end{proof}

We are now ready to prove Theorem \ref{thm:wigner}

\begin{proof}[Proof of Theorem \ref{thm:wigner}]
Set $M := W + F$, and let $\alpha > 0$.  Define the event
$$ \Omega := \{ \text{the spectrum of } M \text{ is simple} \} \bigcap \{ v_j(M)^\mathrm{T} e_i \neq 0 \text{ for all } 1 \leq i, j \leq n \}. $$
By Theorem \ref{thm:simple} and Lemma \ref{lemma:eigenvectors}, there exists $C > 0$ such that $\Prob (\Omega^c) \leq C n^{-\alpha}$.  On the event $\Omega$, it follows from Lemma \ref{lemma:simplenz} that $(M,e_i)$ is controllable for each $1 \leq i \leq n$.
\end{proof}

Corollary \ref{cor:adj} now follows from Theorem~\ref{thm:wigner}.

\begin{proof}[Proof of Corollary \ref{cor:adj}]
Let $0 < p < 1$, and let $A$ be the adjacency matrix of $G(n,p)$.  Set $\sigma^2 := p(1-p)$.  Note that $(A,e_i)$ is controllable if and only if $(\sigma^{-1} A, e_i)$  is controllabe.

Define the random variables $\zeta := 0$ and
$$ \xi := \left\{
     \begin{array}{rl}
       \frac{1-p}{\sigma}, & \text{with probability } p, \\
       -\frac{p}{\sigma}, & \text{with probability } 1-p.
     \end{array}
   \right. $$
In particular, $\xi$ and $\zeta$ both have mean zero and $\xi$ has unit variance.  In addition, both are sub-gaussian random variables (with sub-gaussian moments depending only on $p$).  Let $W$ be an $n \times n$ Wigner random matrix with atom variables $\xi$ and $\zeta$.  Define the deterministic matrix $F = (f_{ij})_{i,j=1}^n$ by
$$ f_{ij} := \left\{
     \begin{array}{rl}
       \frac{p}{\sigma}, & \text{if } i \neq j, \\
       0, & \text{if } i = j.
     \end{array}
   \right. $$
Then $\sigma^{-1} A$ has the same distribution as $W+F$.  Thus, it suffices to estimate the probability that $(W+F,e_i)$ is controllable for all $1 \leq i \leq n$.  Hence, the proof is now complete by applying Theorem \ref{thm:wigner}.
\end{proof}

\subsection{Proof of results in Section \ref{sec:wigrand}} \label{sec:wigrandpf}

This section is devoted to the proof of Theorem \ref{thm:wignerrand}, Theorem \ref{thm:wignerunif}, and Corollary \ref{cor:adjrand}.  Unsurprisingly, the proofs presented here are very similar to the proofs of the previous section.

\begin{proof}[Proof of Theorem \ref{thm:wignerrand}]
Set $M := W + F$, and let $\alpha > 0$.  In view of Lemma \ref{lemma:simplenz} and Theorem \ref{thm:simple}, it suffices to show that
$$ \Prob \left( \text{there exists a unit eigenvector } v \text{ of } M \text{ such that } v^\mathrm{T} (b+\mu) = 0 \right) \leq C n^{-\alpha}. $$

We proceed as in the proof of Lemma \ref{lemma:eigenvectors}.  For a fixed realization of $M$, we say that $M$ is in a $(\beta,C)$-good configuration if
$$ \rho_{\xi, n^{-\beta}}(v) \leq C n^{-\alpha - 1} $$
for all unit eigenvectors $v$ of $M$; here $\rho_{\xi, n^{-\beta}}(v)$ denotes the small ball probability defined in \eqref{eq:def:smallball}.  Let $\Omega_{\beta,C}$ be the event where the spectrum of $M$ is simple and $M$ is in a $(\beta,C)$-good configuration. Theorems \ref{thm:simple} and \ref{thm:smallball} imply that there exists $\beta, C > 0$ such that $\Prob \left( \Omega_{\beta,C}^c \right) \leq C n^{-\alpha-1}$.

For notational convenience, let $S$ denote the set of unit eigenvectors $M$.  When the eigenvalues of $M$ are simple, the eigenvectors of $M$ are uniquely determined up to sign.  Thus, on the event $\Omega_{\beta,C}$, $S$ is a set consisting of $n-1$ orthonormal vectors (where the choice of sign for each vector is arbitrary).  Moreover, for a fixed realization of $M$ and for a fixed vector $v \in S$, $v^\mathrm{T} (b+\mu)= 0$ implies that $|v^\mathrm{T} b - a| \leq n^{-\beta}$ for some $a \in \mathbb{R}$.  In other words,
$$ \Prob \left( v^\mathrm{T} (b+\mu) = 0 \right) \leq \rho_{\xi, n^{-\beta}}(v). $$
Here we have used the assumption that $b$ is independent of $M$.  Thus, by conditioning on $\Omega_{\beta,C}$, we apply the union bound over all vectors in $S$ to obtain
\begin{align*}
	\Prob \left( \exists v \in S \text{ with } v^\mathrm{T} (b + \mu) = 0 \right) &\leq \Prob \left( \exists v \in S \text{ with } v^\mathrm{T} (b + \mu) = 0 | \Omega_{\beta,C} \right) + \Prob(\Omega_{\beta,C}^c) \\
		&\leq C n n^{-\alpha - 1} + C n^{-\alpha - 1}
\end{align*}
and the proof is complete.
\end{proof}

\begin{proof}[Proof of Theorem \ref{thm:wignerunif}]
Set $M := W+F$, and let $\alpha > 0$.  By Lemma \ref{lemma:simplenz}, it suffices to show that, with probability at least $1 - Cn^{-\alpha}$, the spectrum of $M$ is simple and that $v_j(M)^\mathrm{T} u \neq 0$ for all $1 \leq j \leq n$.  The claims follow from Theorem \ref{thm:simple} and Lemma \ref{lemma:uniform}.
\end{proof}

Using Theorems \ref{thm:wignerrand} and \ref{thm:wignerunif}, we now prove Corollary~\ref{cor:adjrand}.  
\begin{proof}[Proof of Corollary \ref{cor:adjrand}]
We proceed as in the proof of Corollary \ref{cor:adj}.  Let $\alpha > 0$.  Let $0 < p < 1$, and let $A$ be the adjacency matrix of $G(n,p)$.  Assume $b$ is a random vector whose entries are iid zero-one Bernoulli random variables with parameter $p$.  Set $\sigma^2 := p(1-p)$.  Observe that $(A,b)$ is controllable if and only if $(\sigma^{-1} A, b)$ is controllable.  Thus, it suffices to compute the probability that $(\sigma^{-1} A, b)$ is controllable.

Define the random variables $\zeta := 0$ and
$$ \xi := \left\{
     \begin{array}{rl}
       \frac{1-p}{\sigma}, & \text{with probability } p, \\
       -\frac{p}{\sigma}, & \text{with probability } 1-p.
     \end{array}
   \right. $$
In particular, $\xi$ and $\zeta$ both have mean zero and $\xi$ has unit variance.  In addition, both are sub-gaussian random variables (with sub-gaussian moments depending only on $p$).  Let $W$ be an $n \times n$ Wigner random matrix, independent of $b$, with atom variables $\xi$ and $\zeta$.  Define the deterministic matrix $F = (f_{ij})_{i,j=1}^n$ by
$$ f_{ij} := \left\{
     \begin{array}{rl}
       \frac{p}{\sigma}, & \text{if } i \neq j, \\
       0, & \text{if } i = j.
     \end{array}
   \right. $$
Then $\sigma^{-1} A$ has the same distribution as $W+F$.  Thus, it suffices to estimate the probability that $(W+F,b)$ is controllable.  Since rank is invariant under scaling, it in fact suffices to compute the probability that $(W+F, \sigma^{-1} b)$ is controllable.

Let $b'$ be a random vector, independent of $W$, whose entries are iid copies of $\xi$; let $\mu$ be a vector in $\mathbb{R}^n$ whose entries are all taken to be $p/\sigma$.  Then, $\sigma^{-1}b$ has the same distribution as $b' + \mu$.  Therefore it suffices to estimate the probability that $(W+F, b' + \mu)$ is controllable.  By Theorem \ref{thm:wignerrand}, we conclude that $(W+F,b'+\mu)$ is controllable with probability at least $1-Cn^{-\alpha}$.

We now show that $(A,u)$ is controllable, where $u$ is a random vector uniformly distributed on the unit sphere, independent of $A$.  By the arguments above, it suffices to estimate the probability that $(W+F, u)$ is controllable, where $W$ and $F$ are defined above and $W, u$ are taken to be independent.  The claim now follows from Theorem \ref{thm:wignerunif}.
\end{proof}

\subsection{Proof of results in Section \ref{sec:oe}} \label{sec:oepf}

This section is devoted to the proof of Theorem \ref{thm:oe} and Corollary \ref{cor:oe}.  
\begin{proof}[Proof of Theorem \ref{thm:oe}]
Let $M$ be an $n \times n$ real symmetric matrix drawn from an orthogonally invariant ensemble, and let $b$ be a nonzero vector in $\mathbb{R}^n$.  Since the rank of a matrix is invariant under scaling, it suffices to assume that $b$ is a unit vector.

Let $U$ be an $n \times n$ random orthogonal matrix distributed according to Haar measure, independent of $M$.  Since $M$ is orthogonally invariant (see, for example, \cite[Section 2.2]{DG}), it follows that $UMU^\ast$ has the same distribution as $M$.  Thus,
\begin{align*}
	\Prob &\left( \rank \begin{pmatrix} b & Mb &\cdots & M^{n-1} b \end{pmatrix} = n \right) \\
	&\qquad\qquad\qquad = \Prob \left( \rank \begin{pmatrix} U U^\ast b & U M U^\ast b & \cdots & U M^{n-1} U^\ast b \end{pmatrix} = n \right).
\end{align*}
As $U$ is an invertible matrix, it suffices to show that $(M, U^\ast b)$ is controllable with probability one.

Since $U^\ast b$ is a random vector uniformly distributed on the unit sphere in $\mathbb{R}^n$, it in fact suffices to show that $(M,u)$ is controllable with probability one, where $u$ is uniformly distributed on the unit sphere, independent of $M$.

It is well-known (see, for instance, \cite[Section 2.3]{DG}), that the spectrum of $M$ is simple with probability one.  By Lemma \ref{lemma:simplenz}, it thus suffices to show that, with probability one, $v_j(M)^\mathrm{T} u \neq 0$ for all $1 \leq j \leq n$.  The claim now follows from Lemma \ref{lemma:uniform}.
\end{proof}

\begin{proof}[Proof of Corollary \ref{cor:oe}]
Let $\Omega$ be the event that $(M,b)$ is controllable.  We condition on $b$ (since $M$ and $b$ are independent) to obtain
\begin{align*}
	\Prob \left( \Omega \right) &= \Prob \left( \Omega | b \neq 0 \right) \Prob (b \neq 0) + \Prob \left( \Omega | b = 0 \right) \Prob( b = 0 ) \\
	& = \Prob( b \neq 0 ) \\
	&= 1 - \Prob(b = 0)
\end{align*}
by Theorem \ref{thm:oe}.  Here we used the fact that $(A,0)$ is not controllable for any matrix $A$.
\end{proof}

\section*{Acknowledgments} 
The authors are thankful Prof.\ Soheil Mohajer for his constructive discussions as well as suggestions to improve this work. We are thankful to Prof.\ Alex Olshevsky for communicating Conjecture~\ref{conj:conjecture1} to us.  We also thank Prof.\ Van Vu for useful discussions.

\section{Closing remarks} \label{sec:closing}
In many cases, the above results can also be extended to Hermitian random matrices whose off-diagonal entries are complex-valued random variables. Although, the focus of this work has been the study of SISO systems, some generalizations to the case of MIMO systems is fairly straightforward. In addition, although our study is focused on controllability of random systems, due to the duality of controllability and observability, all of our results can be formulated naturally for observability and controllability of random systems. 

To the best of the authors' knowledge, this work is the first study of controllability of random systems and many questions are left for future works. One of the main open problems is to understand the extent to which the universality of controllability and minimal controllability holds. For example, the independence assumption on the entries of the Wigner matrices discussed in this work is essential to our analysis. This is certainly not the case when one considers Laplacian dynamics. The existence of such universality results for other ensembles of random matrices is left open for future studies. Also, controllability of a random matrix with an arbitrary deterministic input vector other than $e_i$ (such as the case of Conjecture~\ref{conj:conjecture2}) remains unanswered.

\bibliographystyle{plain}
\bibliography{control}
\end{document}